\newtheorem{theorem}{Theorem}
\newtheorem{lemma}[theorem]{Lemma}
\newtheorem{prop}[theorem]{Proposition}
\newtheorem{deff}[theorem]{Definition}
\newtheorem{cor}[theorem]{Corollary}
\newtheorem{rmk}[theorem]{Remark}
\newcommand{\perf}{\text{perf}}
\newcommand{\catC}{\mathscr{C}}
\newcommand{\calc}{\mathcal{C}}
\newcommand{\F}{\mathbb{F}}
\newcommand{\Hom}{\text{Hom}}
\newcommand{\Z}{\mathbb{Z}}
\newcommand{\C}{\mathbb{C}}
\newcommand{\Sym}{\text{Sym}}
\newcommand{\oo}{\mathcal{O}}
\newcommand{\End}{\mathrm{End}}
\newcommand{\id}{\mathrm{id}}
\newcommand{\Ob}{\mathsf{Ob}}
\newcommand{\triv}{\mathbbm{1}}
\newcommand{\catB}{\mathscr{B}}
\newcommand{\Com}{\underline{\text{Com}}}
\newcommand{\catD}{\mathscr{D}}
\author{Ville Nordström}
\title{Finite group actions on dg categories and Hochschild homology}
\date{}
\begin{document}
\maketitle
\begin{abstract}We prove a decomposition of the Hochschild homology groups of the equivariant dg category $\catC^G$ associated to a small dg category $\catC$ with direct sums on which a finite group $G$ acts. When the ground field is $\C$ this decomposition is related to a categorical action of $\text{Rep}(G)$ on $\catC^G$ and the resulting action of the representation ring $R_\C(G)$ on $HH_\bullet(\catC^G)$.
\end{abstract}

\section{Introduction}
Let $\catC$ be a dg category over a field of characteristic zero and $G$. If $G$ is a finite group which acts on $\catC$ via autoequivalences $\rho_g:\catC\to \catC$ we can consider the dg category $\catC^G$ of equivariant objects in $\catC$. When $\catC$ is the module category of a dg algebra $A$ and the group action comes from an action of $G$ on $A$ then it follows from \cite{Quddus} (\cite{Getzler-Jones} in the case of an ordinary algebra) that 
\begin{align}\label{decomposition}HH_\bullet(\catC^G)\cong \bigoplus_{[g]}HH_\bullet(\catC;\rho_g)^{C(g)}\end{align}
where $[g]$ denotes the conjugacy class of $g$ and $C(g)$ is the centralizer of $g$ in $G$. In this note we prove that this decomposition holds assuming only that $\catC$ is a small dg category that admits finite direct sums. To our knowledge this has not appeared in the literature. Even in situations where this decomposition was already known we believe proving it directly in the dg category setting is illuminating. The projection and inclusion maps
$$\begin{tikzcd}
HH_\bullet(\catC^G)\arrow[two heads, shift left]{r}{}	&HH_\bullet(\catC;\rho_g)^{C(g)}\arrow[hookrightarrow, shift left]{l}{}
\end{tikzcd}
$$
 associated to this decomposition arise very naturally in the dg category setting; they are induced by two dg functors
$$\catC^G\overset{For}{\to }\catC, \quad \catC\overset{S}{\to}\catC^G$$
and natural transformations 
$$S\circ \rho_g\cong S, \ \rho_g\circ For\cong For.$$

Moreover, in the dg category setting this decomposition has another interesting explanation when the ground field is $\C$. Namely, we explain how the representation ring of $G$ over $\C$, $R_\C(G)$ acts on $HH_\bullet(\catC^G)$. We then prove that the decomposition above matches up with the decomposition of the $R_\C(G)$-module 
$$HH_\bullet(\catC^G)=\bigoplus_{[g]} \varphi_{[g]}\cdot HH_\bullet(\catC^G)$$ 
where $\varphi_{[g]}\in R_\C(G)$
denotes the indicator function on the conjugacy class of $g$.

Finally we note that as a consequence of the decomposition (\ref{decomposition}) above we have that for any small dg category which admits finite sums, $\text{Sym}^n(HH_\bullet(\catC))$ appears naturally as a summand in $HH_\bullet(\text{Sym}^n(\catC))$. This is related to Conjecture 3.24 in \cite{Belmans-Fu-Krug}.

In \cite{Perry}, Perry establishes a decomposition analogous to (\ref{decomposition}) for Hochschild cohomology also using the functors $For$ and $S$ mentioned above. Apart from that however our approaches differ in the following way. They use adjoint properties of these functors and the description of Hochschild cohomology as a derived hom between bimodules. In general one cannot describe Hochschild homology as derived hom between bimodules so another approach is necessary. We use instead that Hochschild homology is functorial with respect to dg functors, something which is not true for cohomology, to produce the projections and inclusions associated to the decomposition (\ref{decomposition}).

\subsection{Summary of paper}
In section 2 we introduce group actions on dg categories. In section 3.1 we recall some basic facts about dg categories. In section 3.2 we recall the definition of Hochschild homology of dg categories with values in an endofunctor and prove a few simple lemmas. In section 3.3 we recall the Künneth formula for Hochschild homology. Although the Künneth formula is well known, we could not find a proof which both provides an explicit formula and works at the level of generality that we need so we include a proof as well. Finally in section 4.1 we state and prove our main result and in section 4.2 we describe an application of it.

\subsection{Conventions}
All vector spaces will be over a fixed ground field $k$ of characteristic zero. Sometimes we will need $k=\C$ and will then make this clear.

All functors between dg categories will be dg functors. By a natural transformation between two dg functors $F,F':\catC\to\catB$ we will always mean a closed degree zero natural transformations. An equivalence $F:\catC\to \catB$ between dg categories will mean a dg functor that admits an inverse $F^{-1}:\catB\to \catC$ such that $F\circ F^{-1}\cong \id_{\catB}$ and $F^{-1}\circ F\cong \id_\catC$ and these natural isomorphisms are required to be degree zero cycles.

Our convention on matrices is that they act on the left. So when we describe a morphism in some category between objects with specified direct sum decompositions by a matrix, the columns will be indexed by the summands of the source and the rows by the summands of the target.

\section{Finite group actions on dg categories} \label{Finite group actions on dg categories}
When talking about an action of a finite group $G$ on a dg category $\catC$ there are different levels of strictness that one can require. Here is the definition that we will work with.
\begin{deff}\label{groupaction}
An action of $G$ on $\catC$ is the data of an autoequivalence $\rho_g:\catC\overset{\sim}{\to} \catC$ for every $g\in G$, natural isomorphisms $\theta_{g,g'}:\rho_{g}\circ \rho_{g'}\overset{\sim}{\implies}\rho_{g'g}$ and $\eta:\rho_e\overset{\sim}{\implies} \id_\catC$ and these should satisfy the following two conditions.\\

i) For each $g\in G$ and $c\in \catC$ we have
$$(\theta_{e,g})_c=\eta_{\rho_g(c)}:\rho_e\rho_g(c)\to \rho_g(c)$$
and  
$$(\theta_{g,e})_c=\rho_g(\eta_{c}):\rho_g\rho_e(c)\to \rho_g(c).$$

ii) For all $g,h,k\in G$ we have commutative diagrams
$$\begin{tikzcd}\rho_g\rho_h\rho_k(c)\arrow{r}{\rho_g((\theta_{h,k})_c)}\arrow{d}{(\theta_{g,h})_{\rho_k(c)}}&\rho_{g}\rho_{kh}(c)\arrow{d}{(\theta_{g,kh})_c}\\
\rho_{hg}\rho_k(c)\arrow{r}{(\theta_{hg,k})_c} &\rho_{khg}(c)	
\end{tikzcd}
$$
\end{deff}
\begin{rmk}
	One could also consider a more lax notion of group action where each group element acts via a quasi equivalence and the equalities of morphisms in the two conditions above are replaced by specified homotopies. However, we don't know of interesting examples where such group actions arise. The type of group action we defined above on the other hand does arise. For example if $G$ acts on a smooth variety $X$ then we get a group action in the above sense on the dg category of perfect complexes $\perf(X)$ or on the dg category of matrix factorizations $MF(X,W)$ of some function $W\in \oo_X(X)^G$ by pulling back vector bundles.
\end{rmk}

Whenever we have a group action as above we can consider the category of $G$-equivariant objects in $\catC$. 
\begin{deff}\label{equivariantobject}
	Let $(\rho,\theta,\eta)$ be an action of a finite group $G$ on a small dg category $\catC$. The dg category of equivariant objects, denoted $\catC^G$ has objects 
	$$(c,(\alpha_g:c\overset{\sim}{\to}\rho_g(c))_{g\in G})$$
	where the $\alpha_g$ are isomorphisms that are required to satisfy the following condition. For any $c\in \catC$ and $g,h\in G$ the following diagram
	$$\begin{tikzcd}
		c\arrow{r}{\alpha_g}\arrow{d}{\alpha_{hg}}&\rho_g(c)\arrow{d}{\rho_g(\alpha_h)}\\
		\rho_{hg(c)}\arrow{r}{(\theta_{g,h}^{-1})_c}&\rho_g\rho_h(c)
	\end{tikzcd}$$
	commutes.
	
	A degree $n$ morphism between equivariant objects $\varphi:(c,\alpha)\to (c',\alpha')$ is a degree $n$ morphism in $\catC$ such that 
	$$\begin{tikzcd}
		c\arrow{r}{\varphi}\arrow{d}{\alpha_g}&c'\arrow{d}{\alpha_g'}\\
		\rho_g(c)\arrow{r}{\rho_g(\varphi)}&\rho_g(c').
	\end{tikzcd}$$
\end{deff}
If the category $\catC$ has finite direct sums we can define a tensor product $\otimes: \text{Rep}(G)\times \catC^G\to \catC^G$. Indeed, if $(c,\alpha)\in \catC^G$ and $(V,\rho_V)$ is a left $G$-module we define their tensor product $(V,\rho_V)\otimes (c,\alpha)$ as follows. Let $\{e_1,...,e_n\}$ be a basis for $V$. Then 
$$\bigoplus_{i=1}^nc\cdot e_i\in \catC$$
is the object underlying $(V,\rho_V)\otimes (c,\alpha)$. The equivariant structure is given in matrix form by $((\rho_V\otimes \alpha)_g)_{ij}:=\rho_V(g)_{ij}\cdot \alpha_g$. To verify that this is indeed an equivariant object we have to check that $(\alpha\otimes \rho_V)_g$ is indeed an isomorphism and that the condition from definition $\ref{equivariantobject}$ is satisfied. To see that it is an isomorphism we simply note that its inverse is given by the matrix $((\rho_g^{-1})_{ij}\alpha_g^{-1})_{ij}$. To see that it is an equivariant structure we note that
$$\sum_j \rho_V(h)_{ij}\rho_V(g)_{jk}\rho_g(\alpha_h)\alpha_g=\rho_V(hg)_{ik}\theta_{g,h}^{-1}\alpha_{hg}.$$

There is a forgetful functor $For:\catC^G\to \catC$. If $\catC$ has finite direct sums then there is also a functor in the other direction which we call the symmetrization functor and denote by $S$. It is defined by
$$S(c):=\Big(\bigoplus_{h\in G}\rho_h(c),\alpha_g=(\delta_{hg=h'}\theta_{g,h}^{-1})_{hh'}\Big).$$
\begin{prop}
	The symmetrization functor is left adjoint to the forgetful functor.
\end{prop}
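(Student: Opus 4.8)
The plan is to construct, for each $c\in\catC$ and each equivariant object $(c',\alpha')\in\catC^G$, a natural isomorphism of morphism complexes
$$\Hom_{\catC^G}\big(S(c),(c',\alpha')\big)\;\cong\;\Hom_{\catC}\big(c,For(c',\alpha')\big)=\Hom_\catC(c,c'),$$
which is exactly the data of a dg adjunction $S\dashv For$. Concretely, a morphism $\varphi\colon S(c)\to(c',\alpha')$ is a morphism $\varphi\colon\bigoplus_{h\in G}\rho_h(c)\to c'$ in $\catC$ with components $\varphi_h\colon\rho_h(c)\to c'$, subject to the equivariance condition of Definition \ref{equivariantobject}. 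I would define
$$\Phi(\varphi):=\varphi_e\circ\eta_c^{-1}\colon c\longrightarrow c',$$
that is, restrict to the $e$-summand and untwist by $\eta$, and take as candidate inverse the map $\Psi$ sending $f\colon c\to c'$ to the morphism $\Psi(f)$ with components $\Psi(f)_h:=(\alpha'_h)^{-1}\circ\rho_h(f)$. Equivalently, one can phrase the whole argument through a unit $\iota_c\colon c\xrightarrow{\eta_c^{-1}}\rho_e(c)\hookrightarrow\bigoplus_{h\in G}\rho_h(c)=For(S(c))$ and a counit $\epsilon_{(c',\alpha')}\colon S(c')\to(c',\alpha')$ with $h$-component $(\alpha'_h)^{-1}\colon\rho_h(c')\to c'$, and then verify the two triangle identities; I will follow the hom-complex formulation, since the essential computation is the same.

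The first step is to check that $\Psi(f)$ really lands in $\catC^G$, i.e.\ satisfies the equivariance condition. Writing that condition out componentwise for $\Psi(f)$, and using naturality of the coherence isomorphisms $\theta$ to slide $f$ past them, it reduces to the identity
$$\alpha'_g\,(\alpha'_h)^{-1}\;=\;\rho_g\big((\alpha'_{hg^{-1}})^{-1}\big)\circ(\theta_{g,hg^{-1}}^{-1})_{c'},$$
which is precisely the cocycle relation of Definition \ref{equivariantobject} for $(c',\alpha')$ applied with parameters $hg^{-1}$ and $g$. Next, $\Phi\circ\Psi=\id$: here $\Phi(\Psi(f))=(\alpha'_e)^{-1}\circ\rho_e(f)\circ\eta_c^{-1}$, and one observes that $\alpha'_e=\eta_{c'}^{-1}$ — this follows from the $g=h=e$ instance of the diagram in Definition \ref{equivariantobject}, condition i) of Definition \ref{groupaction}, and faithfulness of the equivalence $\rho_e$ — so that naturality of $\eta$ collapses the expression to $f$. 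The reverse identity $\Psi\circ\Phi=\id$ is the heart of the matter: for an equivariant $\varphi$, expanding its equivariance condition componentwise, a suitable component reads
$$\alpha'_g\circ\varphi_g\;=\;\rho_g(\varphi_e)\circ(\theta_{g,e}^{-1})_c\;=\;\rho_g\big(\varphi_e\circ\eta_c^{-1}\big)\;=\;\rho_g\big(\Phi(\varphi)\big),$$
where the middle equality uses $(\theta_{g,e})_c=\rho_g(\eta_c)$ from condition i); hence $\varphi_g=(\alpha'_g)^{-1}\circ\rho_g(\Phi(\varphi))=\Psi(\Phi(\varphi))_g$ for every $g$, so $\varphi=\Psi(\Phi(\varphi))$.

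It then remains to note that $\Phi$ and $\Psi$ are chain maps — they are pre- and post-composition with the fixed closed degree-zero morphisms $\eta_c^{-1}$, the summand inclusion, and the $(\alpha'_h)^{-1}$, none of which interacts with the differential, and the equivariance condition is itself stable under $d$ because the $\alpha$'s are closed of degree zero — and that $\Phi$ (hence $\Psi$) is natural in $c$ and in $(c',\alpha')$; naturality in $c$ uses naturality of $\eta$, and naturality in $(c',\alpha')$ uses the definition of a morphism in $\catC^G$. I expect the only genuine obstacle to be bookkeeping: correctly threading the coherence isomorphisms $\theta_{g,h}$ through the direct sum defining $S$ and tracking which summand each matrix entry connects, so that the componentwise expansions come out as honest instances of conditions i)–ii) of Definition \ref{groupaction} and of the cocycle condition of Definition \ref{equivariantobject}. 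Conceptually the statement, and its proof, are nothing but categorified Frobenius reciprocity: induction from the trivial subgroup is left adjoint to restriction.
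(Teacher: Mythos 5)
Your proof is correct and follows essentially the same route as the paper: the paper's two maps $\varphi\mapsto(\alpha_g^{-1}\circ\rho_g(\varphi))_g$ and $\psi\mapsto$ (the $e$-component untwisted by $\eta$) are exactly your $\Psi$ and $\Phi$, with the roles of $c$ and $c'$ swapped in notation. You simply carry out in detail the verifications (equivariance of $\Psi(f)$, the two composite identities via $\alpha'_e=\eta_{c'}^{-1}$ and condition i), compatibility with differentials, naturality) that the paper leaves to the reader.
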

\begin{proof}
Suppose $(c,\alpha)$ is an equivariant object and $c'$ is just an object. Define
$$\begin{tikzcd}\catC(c',c)\arrow{r}{}& \catC^G(S(c'),(c,\alpha))\\
\varphi\arrow[mapsto]{r}{}&(\alpha_g^{-1}\circ \rho_g(\varphi))_g.
\end{tikzcd}
$$

Now define
$$\begin{tikzcd}\catC^G(S(c'),(c,\alpha))\arrow{r}{}& \catC(c',c)\\
\psi\arrow[mapsto]{r}{}&\eta_c^{-1}\circ\alpha_e\circ\psi_e\circ \eta_{c'}.
\end{tikzcd}
$$
These are inverse bijections which proves the proposition.
\end{proof}

\begin{prop}\label{SFor}
	We have $S\circ For\cong k[G]\otimes (-)$ and $For\circ S\cong \bigoplus_g \rho_g$.
\end{prop}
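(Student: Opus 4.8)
The plan is to prove the two isomorphisms separately, in each case by writing down the underlying objects and equivariant structures explicitly and then exhibiting natural isomorphisms in $\catC^G$.

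\medskip

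\noindent\emph{Step 1: the isomorphism $For\circ S\cong \bigoplus_g\rho_g$.} This is the easier of the two and I would do it first, since $For\circ S$ has no equivariant structure to track. Unwinding the definition of $S$, the functor $For\circ S$ sends $c$ to the plain object $\bigoplus_{h\in G}\rho_h(c)$, and a morphism $\varphi$ to the diagonal matrix $(\delta_{hh'}\rho_h(\varphi))_{hh'}$ (this is forced by the fact that $\varphi$ is an equivariant morphism, but here we have simply forgotten down to $\catC$). On the other side, $\bigl(\bigoplus_g\rho_g\bigr)(c)=\bigoplus_{g\in G}\rho_g(c)$ with $\varphi$ sent to the same diagonal matrix. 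So the identity matrix furnishes a natural isomorphism; the only thing to check is naturality, which is immediate from the diagonal form. I would spend one or two sentences on this.

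\medskip

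\noindent\emph{Step 2: the isomorphism $S\circ For\cong k[G]\otimes(-)$.} Here $k[G]$ carries the left regular representation, with basis $\{e_h\}_{h\in G}$ on which $g$ acts by $e_h\mapsto e_{gh}$, so the matrix of $\rho_{k[G]}(g)$ has $(h',h)$-entry $\delta_{h'=gh}$. Applied to an equivariant object $(c,\alpha)\in\catC^G$, the tensor product $k[G]\otimes(c,\alpha)$ has underlying object $\bigoplus_{h\in G}c\cdot e_h$ and equivariant structure the matrix $\bigl(\delta_{h'=gh}\,\alpha_g\bigr)_{h'h}$. On the other side, $S\bigl(For(c,\alpha)\bigr)=\bigl(\bigoplus_{h\in G}\rho_h(c),\ (\delta_{h'=gh}\,\theta_{g,h}^{-1})_{h'h}\bigr)$, forgetting $\alpha$ entirely. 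The key idea is to compare these two via the diagonal isomorphism in $\catC$ whose $h$-th component is $\alpha_h\colon c\to\rho_h(c)$; I would package this as a matrix $\Psi_c:=(\delta_{h'h}\,\alpha_h)_{h'h}$ and claim it defines a morphism $k[G]\otimes(c,\alpha)\to S(For(c,\alpha))$ in $\catC^G$. The two things to verify are: (a) $\Psi_c$ is compatible with the equivariant structures, i.e. the square relating $(\delta_{h'=gh}\alpha_g)$, $\Psi_c$, $\rho_g(\Psi_c)$ and $(\delta_{h'=gh}\theta_{g,h}^{-1})$ commutes — this reduces, entry by entry, precisely to the cocycle condition $\rho_g(\alpha_h)\,\alpha_g=\theta_{g,h}^{-1}\,\alpha_{gh}$ from Definition \ref{equivariantobject} (written for the pair $g,h$ in the appropriate order); and (b) naturality of $\Psi$ in $(c,\alpha)$, which is exactly the condition that a morphism $\varphi\colon(c,\alpha)\to(c',\alpha')$ in $\catC^G$ intertwines $\alpha$ and $\alpha'$. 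Since each $\alpha_h$ is an isomorphism, $\Psi_c$ is invertible with inverse the diagonal matrix of the $\alpha_h^{-1}$, so $\Psi$ is a natural isomorphism.

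\medskip

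\noindent I expect the main (though modest) obstacle to be purely bookkeeping: matching index conventions so that the defining cocycle condition for equivariant objects lines up exactly with the compatibility square for $\Psi_c$, being careful with the left-action matrix convention stated in the introduction and with the order of subscripts in $\theta_{g,h}$ versus $\theta_{h,g}$. There is no conceptual difficulty — both isomorphisms are "the identity up to the obvious reindexing/twist by $\alpha$" — so the proof is essentially a careful unwinding of the definitions of $S$, $For$, and the $\text{Rep}(G)$-action, together with one invocation of the equivariance axiom of Definition \ref{equivariantobject}.
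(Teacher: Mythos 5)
Your Step 1 is fine; the paper treats $For\circ S=\bigoplus_g\rho_g$ as essentially an equality of functors, as you do. Step 2, however, contains a genuine error rather than mere bookkeeping. With the left regular representation on $k[G]$, the equivariant structure of $k[G]\otimes(c,\alpha)$ sends the $h$-th summand to the $gh$-th summand (left multiplication by $g$ on the index set $G$), whereas the structure map of $S(c)$, namely $\alpha_g=(\delta_{hg=h'}\theta_{g,h}^{-1})_{hh'}$, sends the $h'$-th summand to the $h'g^{-1}$-th summand (right multiplication by $g^{-1}$). A diagonal matrix $\Psi_c=(\delta_{h'h}\,\alpha_h)_{h'h}$ cannot intertwine these two structures: going around the compatibility square one way you land in the $gh$-summand and the other way in the $hg^{-1}$-summand, so the square does not ``reduce entry by entry to the cocycle condition'' --- it fails to commute already for $G=\Z/3$. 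Relatedly, the formula you wrote for the equivariant structure of $S(c)$, $(\delta_{h'=gh}\theta_{g,h}^{-1})_{h'h}$, is not the paper's definition and does not type-check: $\theta_{g,h}^{-1}$ has source $\rho_{hg}(c)$, not $\rho_h(c)$.

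The repair is forced by the mismatch just described: the permutation actions ``left multiplication by $g$'' and ``right multiplication by $g^{-1}$'' of $G$ on itself are intertwined by inversion $h\mapsto h^{-1}$, and the paper's isomorphism $(\delta_{g=g'^{-1}}\alpha_{g})_{gg'}$ is exactly the corresponding anti-diagonal matrix, sending the summand $c\cdot e_{g'}$ to the summand $\rho_{g'^{-1}}(c)$ via $\alpha_{g'^{-1}}$. With that map the square does reduce to the cocycle condition $\rho_g(\alpha_h)\circ\alpha_g=(\theta_{g,h}^{-1})_c\circ\alpha_{hg}$ of Definition \ref{equivariantobject} (note it is $\alpha_{hg}$, not $\alpha_{gh}$). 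Alternatively, your diagonal $\Psi_c$ works verbatim if $k[G]$ is given the right regular representation $e_h\mapsto e_{hg^{-1}}$ (abstractly isomorphic to the left one), but that is not the representation you specified. Your naturality check (b) and the invertibility of $\Psi_c$ are fine once the underlying permutation is corrected.
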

\begin{proof}
	Let $(c,\alpha)$ be an equivariant object. Consider the following diagram, where the morphisms are written as matrices,
	$$\begin{tikzcd}
		k[G]\otimes c\arrow{rr}{(\delta_{g=g'^{-1}}\alpha_{g})_{gg'}}\arrow{d}{(\delta_{k'=hg'}\alpha_h)_{k'g'}}&&S\circ For(c)\arrow{d}{(\delta_{kh=g}\theta_{h,k}^{-1})_{kg}}\\
		\rho_h(k[G]\otimes c)\arrow{rr}{(\delta_{k=k'^{-1}}\rho_h(\alpha_k))_{kk'}}&&\rho_h(S\circ For(c)).
	\end{tikzcd}$$
The top horizontal arrow is an isomorphism in $\catC$ so if the diagram commutes it is also an isomorphism in $\catC^G$. We check that it commutes
\begin{align*}\sum_g\delta_{kh=g}\theta_{h,k}^{-1}\delta_{g=g'^{-1}}\alpha_g=\delta_{(g')^{-1}=kh}\theta_{h,k}^{-1}\alpha_{kh}=	\delta_{(g')^{-1}=kh}\rho_h(\alpha_k)\alpha_h=\\\sum_{k'}\delta_{k=k'^{-1}}\rho_h(\alpha_k)\delta_{k'=hg'}\alpha_h.
\end{align*}
\end{proof}

\section{Hochschild homology of dg categories}
\subsection{Some background on dg categories}\label{backgroundondg}
Here we give a brief summary of dg categories and their modules. We follow section 2.1 of \cite{Pol-Pol}.

A right module over dg category $\catC$ is a dg functor $M:\catC^{op}\to \underline{\text{Com}}(k)$ where the later is the dg category whose objects are chain complexes of vector spaces and whose morphism complexes are defined in the usual way. Similarly we define a left module as functors $\catC\to \Com(k)$. The collection of right (resp. left) dg modules themselves form a dg category in a natural way which we denote $mod-\catC$ (resp. $\catC-mod$). For any $c\in \Ob(\catC)$ we denote by $h_c$ the right module represented by $c$. The category $Z^0(mod-\catC)$ has the structure of an exact category where a sequence of modules
$$M\to M''\to M$$
is defined to be exact if for any $c\in \Ob(\catC)$ \textit{both} 
$$M(c)\to M''(c)\to M'(c)$$
is an exact sequence of complexes and 
$$H(M(c))\to H(M''(c))\to H(M'(c))$$
is an exact sequence of graded vector spaces.
\begin{lemma}\label{projectivemodules}
If $V$ is any chain complex of vector spaces, and $c\in\Ob(\catC)$ then the functor $V\otimes h_c$ is projective in the exact structure defined above.	 
\end{lemma}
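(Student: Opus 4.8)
The plan is to compute the space of morphisms out of $V\tensor h_c$ by means of a dg version of the Yoneda lemma, which turns the defining lifting property of a projective object into an elementary statement about surjections of complexes of vector spaces.

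First I would recall that for any right module $M$ there is an isomorphism of complexes $\Hom_{mod-\catC}(h_c,M)\cong M(c)$, natural in $c$ and $M$, given by evaluation at $\id_c$. Combining this with the tensor--hom adjunction over $k$ yields a natural isomorphism of complexes
$$\Hom_{mod-\catC}(V\tensor h_c,M)\cong \Hom_{\Com(k)}(V,M(c)),$$
and passing to degree-zero cycles identifies $\Hom_{Z^0(mod-\catC)}(V\tensor h_c,M)$, naturally in $M$, with the set of chain maps $V\to M(c)$. Thus proving that $V\tensor h_c$ is projective amounts to the following: for every admissible epimorphism $e\maps M''\to M'$ in $Z^0(mod-\catC)$, every chain map $V\to M'(c)$ lifts along $e_c\maps M''(c)\to M'(c)$.

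By the definition of the exact structure, $e_c$ is a surjective chain map which is moreover surjective on homology. Let $K$ be its kernel, so that $0\to K\to M''(c)\xrightarrow{e_c} M'(c)\to 0$ is short exact; in the associated long exact sequence in homology the surjectivity of $H(e_c)$ forces every connecting homomorphism to vanish, hence $H(K)\to H(M''(c))$ is injective. Since $k$ is a field, $V$ decomposes as a direct sum of complexes each of which is either a copy of $k$ concentrated in a single degree with zero differential, or a ``disk'' $k\xrightarrow{\id}k$ sitting in two adjacent degrees; as lifting problems pass through direct sums, it suffices to handle these two cases. For a disk, a chain map into $M'(c)$ is just the datum of a single homogeneous element of $M'(c)$, and it lifts because $e_c$ is surjective. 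For $V=k$ in degree $n$, a chain map into $M'(c)$ is a degree-$n$ cycle $z$; choose any $\tilde z\in M''(c)$ with $e_c(\tilde z)=z$, observe that $d\tilde z\in K$ and that $[d\tilde z]=0$ in $H(M''(c))$, so by injectivity of $H(K)\to H(M''(c))$ we have $d\tilde z=dw$ for some $w\in K$, and then $\tilde z-w$ is a cycle lifting $z$.

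The only step with genuine content is the last one, and the point I would emphasise is why the exact structure is defined with two conditions at once: the homology condition is exactly what guarantees $H(K)\hookrightarrow H(M''(c))$, which is in turn precisely what allows one to correct a naive lift of a cycle by a boundary so that it becomes a cycle, while the on-the-nose exactness handles the disk summands. A minor technical point to get right is to set up the Yoneda and tensor--hom identifications at the level of Hom-complexes first and only afterwards restrict to $Z^0$, so that the lifting problem for morphisms in $Z^0(mod-\catC)$ out of $V\tensor h_c$ is translated faithfully into the lifting problem for chain maps.
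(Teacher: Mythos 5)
Your proposal is correct and follows essentially the same route as the paper: both reduce, via the Yoneda lemma and the tensor--hom adjunction at the level of Hom-complexes, to the claim that a surjection of complexes $X(c)\to M(c)$ which is also surjective on homology induces a surjection on spaces of chain maps out of $V$. The only difference is that the paper asserts this last surjectivity without proof, whereas you justify it by splitting $V$ into spheres and disks and using the long exact sequence; this filled-in detail is accurate and is exactly the point where the homology condition in the exact structure is used.
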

\begin{proof}
Consider a diagram
$$\begin{tikzcd}
	&X\arrow[two heads]{d}{\pi}\\
	V\otimes h_c\arrow{r}{\varphi}&M.
\end{tikzcd}$$	
The map $\varphi$ corresponds to a degree $0$ cycle 
$$\hat{\varphi}\in \Hom_{Z^0(mod-\catC)}(h_c,\Hom_k^\bullet(V,M(-))=\Hom_{Ch(k)}(V,M(c))$$
where $\Hom_{Ch(k)}(A,B)$ denotes chain maps between complexes of vector spaces. Because $X(c)\to M(c)$ and $H(X(c))\to H(M(c))$ are both surjective the induced map
$\Hom_{Ch(k)}(V,X(c))\to \Hom_{Ch(k)}(V,M(c))$ is also surjective so we find a lift of $\hat{\varphi}$, call it $\hat{\psi}$. It corresponds to a map $\psi:V\otimes h_c\to X$ which makes the triangle above commute. 
\end{proof}

If $\catC$ and $\catB$ are two dg categories we can form their tensor product $\catC\otimes\catB$ which by definition is the dg category whose objects are pairs $(c,b)$ with $c\in \Ob(\catC)$ and $b\in \Ob(\catB)$ and the momorphism complexes are defined by
$$(\catC\otimes \catB)((c,b),(c',b')):= \catC(c,c')\otimes\catB(b,b').$$
When defining composition of morphisms in $\catC\otimes \catB$ we take the Koszul sign rule into account. 

A right module over $\catC\otimes \catB^{op}$ is also called a $\catB-\catC$-bimodule. If $M$ is a right $\catC$ module and $N$ is a left $\catC$ we define their tensor product $M\otimes_{\catC}N$ to be the cokernel of the following map
$$\bigoplus_{c,c'\in \catC}M(c)\otimes \catC(c',c)\otimes N(c)\to \bigoplus_{c\in \catC}M(c)\otimes N(c), \ m\otimes f\otimes n\mapsto mf\otimes n-m\otimes fn.$$
Note that $M\otimes_{\catC}N$ is just a chain complex. Similarly, if $M$ is a $\catB-\catC$-bimodule and $N$ is a $\catC-\catD$ bimodule we can define their tensor product $M\otimes_{\catC}N$ which will be a $\catB-\catD$-bimodule.

For any $\catC$ we have two diagonal bimodules. We denote by $\Delta^r_\catC$ the right $\catC\otimes \catC^{op}$ module defined by 
$$(c_1,c_2)\mapsto \catC(c_1,c_2).$$
We denote by $\Delta_{\catC}^l$ the left $\catC\otimes \catC^{op}$ module defined by 
$$(c_1,c_2)\mapsto \catC(c_2,c_1).$$

There is a standard way to resolve $\Delta_\catC^l$ by projective left $\catC\otimes \catC^{op}$-modules, called the bar resolution which we will denote by $\text{Bar}^\bullet(\catC)$. It is defined by 
$$\text{Bar}^n(\catC):=\Big((x,y)\mapsto\bigoplus_{c_0,...,c_{n}\in \catC}\catC(c_0,x)\otimes\catC(c_1,c_0)\otimes...\otimes \catC(y,c_{n})\Big)$$
for $n\geq 1$.
For each $n\geq 1$ we have the map of left $\catC\otimes \catC^{op}$ modules $d^n:\text{Bar}^n(\catC)\to \text{Bar}^{n-1}(\catC)$ defined by
$$f_0|f_1|\cdots|f_{n+1}\mapsto \sum_{i=0}^{n+1}(-1)^{i}f_0|\cdots |f_if_{i+1}|\cdots |f_{n+1}$$
and we have the map $d^0:\text{Bar}^0(\catC)\to \Delta_{\catC}^l$ defined by $f_0|f_1\mapsto f_0f_1$. Note that each $\text{Bar}^n(\catC)$ is projective as a left $\catC\otimes \catC^{op}$ modules by lemma \ref{projectivemodules}. The fact that this is a resolution in the exact structure follows from the fact that for any pair $(x,y)$ we have a contracting homotopy (which is not natural in $x$ and $y$ however)
$$\text{Bar}^{n}(\catC)\to\text{Bar}^{n+1}(\catC), \ f_0|\cdots |f_{n+1}\mapsto \id_x|f_0|\cdots |f_{n+1}.$$

\subsection{Hochschild homology}
We first define the standard complex which we will use to compute Hochschild homology
\begin{deff}
	Let $\catC$ be a small dg category and let $F:\catC\to \catC$ be an endofunctor. Set 
	$$C_n(\catC;F):=\bigoplus_{c_0,...,c_n\in \catC}\catC(c_1,F(c_0))\otimes \catC(c_2,c_1)\otimes...\otimes \catC(c_0,c_n).$$
	There is a differential $d_{2}:C_{n}(\catC;F)\to C_{n-1}(\catC;F)$ defined by 
	\begin{align*}d_2\big(a_0[a_1|\cdots |a_n]\big)&=a_0a_1[a_2|\cdots|a_n]\\
	&+\sum_{i=1}^{n-1}(-1)^{i}a_0[\cdots|a_ia_{i+1}|\cdots]\\
	&+(-1)^{n+|a_n|(|a_0|+...+|a_{n-1}|)}F(a_n)a_0[a_1|\cdots|a_{n-1}].
\end{align*}
Each $C_n(\catC;F)$ is itself a cohomologically graded chain complex $(C_n(\catC;F)^\bullet,d_1)$. The differentials $d_1$ and $d_2$ commute and therefore we get a total complex (cohomologically graded) by altering the sign of $d_1$ by $(-1)^n$ on $C_n(\catC;F)$ whose $k'$th term for $k\in \Z$ is
$$\bigoplus_{n-m=k}C_m(\catC;F)^n.$$
The homology of this complex is the Hochschild homology of $\catC$ with values in $F$ and it is denoted $HH_\bullet(\catC;F)$.
\end{deff}
\begin{rmk}
Different sign conventions appear for the Hochschild differential but they all give rise to isomorphic chain complexes and so we use this one as it is the least complicated. This sign convention appears for example in \cite{Lod}.
\end{rmk}

If we have two dg categories $\catC$, $\catC'$ and two endofunctors $F:\catC\to\catC$, $F':\catC'\to \catC'$ and $\phi:\catC\to \catC'$ is a functor and $\epsilon:\phi\circ F\implies F'\circ \phi$ is a natural transformation, then we get an induced map on Hochschild homology 
$$(\phi,\epsilon)_*:HH_\bullet(\catC;F)\to HH_\bullet(\catC';F')$$
defined at the chain level as
$$\begin{tikzcd}
	\catC(c_1,F(c_0))\otimes...\otimes \catC(c_0,c_n)\arrow{r}{}&\catC'(\phi(c_1),F'\circ \phi(c_0))\otimes...\otimes \catC'(\phi(c_0),\phi(c_n))\\
	f_0[f_1|\cdots |f_n]\arrow[mapsto]{r}{}&\epsilon_{c_0}\phi(f_0)\big[\phi(f_1)|\cdots |\phi(f_n)\big].
\end{tikzcd}$$
We will refer to the data of such a $\phi$ and $\epsilon$ as $(\phi,\epsilon):(\catC,F)\to (\catC',F')$.

\begin{lemma}\label{tracedecomposition}
Let $\catC, \ \catC'$ be dg categories, $F\in \End(\catC),F'\in \End(\catC')$be two endofunctors, $A_1,A_2:\calc\to \calc'$ two dg functors and $\eta_{ij}:A_j\circ F\implies F'\circ A_i$ natural transformations. Set 
$$\eta:=\begin{pmatrix}\eta_{11}&\eta_{12}\\\eta_{21}&\eta_{22}\end{pmatrix}:\begin{matrix}A_1\circ F\\\oplus\\A_2\circ F\end{matrix}\to \begin{matrix}F\circ A_1\\\oplus\\F\circ A_2\end{matrix}.$$
Then $$(A_1\oplus A_2,\eta)_*\simeq (A_1,\eta_{11})_*+(A_2,\eta_{22})_*$$
as maps $C_\bullet(\catC;F)\to C_\bullet (\catC';F')$.
\end{lemma}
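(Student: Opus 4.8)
The plan is to prove this by producing an explicit chain homotopy; in the present generality the statement is the analogue of the Morita invariance of Hochschild homology (the ``generalized trace'' isomorphism $HH_\bullet(\Mat_2(A))\cong HH_\bullet(A)$), and I would model the homotopy on the one that proves it. Write $B:=A_1\oplus A_2$, and for $p=1,2$ let $\iota_p\colon A_p\implies B$ and $\pi_p\colon B\implies A_p$ be the natural transformations given objectwise by the inclusion of, and the projection onto, the $p$-th summand of $B(c)=A_1(c)\oplus A_2(c)$; these are closed of degree zero and satisfy $\pi_p\circ\iota_q=\delta_{pq}\id$ and $\iota_1\circ\pi_1+\iota_2\circ\pi_2=\id_B$. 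Using them one decomposes every morphism complex of $\catC'$ occurring in $C_\bullet(\catC';F')$ whose source and/or target is of the form $B(c)$, resp.\ $F'B(c)$, into its ``block'' summands $\catC'(A_q(c'),A_p(c))$, resp.\ $\catC'(A_q(c'),F'A_p(c))$ (here one uses, as the matrix form of $\eta$ in the statement already does, that $F'$ carries the relevant finite direct sums to direct sums).

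Since $B(f)$ is block-diagonal for every morphism $f$ of $\catC$, while $\eta_{c_0}$ has block $(\eta_{pq})_{c_0}$ in its $(p,q)$ entry, expanding
$$(B,\eta)_*\bigl(f_0[f_1|\cdots|f_n]\bigr)=\eta_{c_0}B(f_0)\bigl[B(f_1)|\cdots|B(f_n)\bigr]$$
along these block decompositions shows that its part living in ``diagonal'' blocks — those in which all block indices equal a common $p$ — is exactly the image of $(A_p,\eta_{pp})_*\bigl(f_0[f_1|\cdots|f_n]\bigr)$ under the inclusion of $\catC'(A_p(c_\bullet),\cdots)$ as a direct summand of $\catC'(B(c_\bullet),\cdots)$ determined by the $\iota_p$. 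So the content of the lemma is that the off-diagonal blocks of $(B,\eta)_*$, together with the difference between those summand inclusions and the identity, form a boundary; note that the off-diagonal entries $\eta_{12},\eta_{21}$ of $\eta$ need not be null-homotopic individually, yet up to chain homotopy they must drop out.

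To build $H\colon C_n(\catC;F)\to C_{n+1}(\catC';F')$ I would interpolate between the ``$B$-lift'' and the ``$A_p$-lifts'' of a chain: $H$ is a signed sum, over $p\in\{1,2\}$ and over insertion positions, of terms obtained by splicing the morphisms $\iota_p$ and $\pi_p$ into $f_0[f_1|\cdots|f_n]$ so that the resulting $(n{+}1)$-chain interpolates between the object tuple $(B(c_0),\dots,B(c_n))$ and the object tuple $(A_p(c_0),\dots,A_p(c_n))$ — the same mechanism as in the contracting homotopy for the cyclic bar complex of $\Mat_2(k)$, where matrix units are spliced in. One then checks $d_2 H+H d_2=(B,\eta)_*-(A_1,\eta_{11})_*-(A_2,\eta_{22})_*$, the cancellations being forced by the relations $\pi_p\circ\iota_q=\delta_{pq}\id$ and $\iota_1\circ\pi_1+\iota_2\circ\pi_2=\id_B$ and by naturality of the $\iota_p$, $\pi_p$ and the $\eta_{ij}$. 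Compatibility with the internal differential $d_1$ holds because $\iota_p$ and $\pi_p$ are $d_1$-closed, so $H$ intertwines $d_1$ up to the sign conventions and the homotopy equation for $d_2$ upgrades to one for the total complexes.

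I expect the only real difficulty to be the bookkeeping: pinning down the precise placement of the $\iota_p$, $\pi_p$ and the exact Koszul signs so that the terms of $d_2 H$ and $H d_2$ cancel in pairs, leaving precisely $(B,\eta)_*-(A_1,\eta_{11})_*-(A_2,\eta_{22})_*$ — special care being needed where the ``wrap-around'' summand of $d_2$ (the one involving $F$, resp.\ $F'$) meets the coefficient $\eta_{c_0}$, since that is where the off-diagonal blocks get killed. A more conceptual but less self-contained route would be to factor $B$ as $\catC\xrightarrow{(A_1,A_2)}\catC'\times\catC'\xrightarrow{\oplus}\catC'$ and combine functoriality of $HH$ with the additivity isomorphism $HH_\bullet(\catC'\times\catC';F'\times F')\cong HH_\bullet(\catC';F')^{\oplus2}$ to handle the diagonal part, arguing separately that the off-diagonal part of $\eta$ contributes nothing up to homotopy; but since that last step again requires a homotopy of the present type, constructing $H$ directly seems no harder.
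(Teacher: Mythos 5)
Your proposal is correct and follows essentially the same route as the paper's proof: both decompose $(A_1\oplus A_2,\eta)_*$ into block components via the inclusions $\iota_p$ and projections $\pi_p$, identify the purely diagonal components with $(A_p,\eta_{pp})_*$ by a homotopy that splices $\iota_p,\pi_p$ into the chain, and kill the mixed components by a second splicing homotopy of the same type. The paper organizes the mixed terms by the number of cyclic ``parity changes'' and writes both homotopies out explicitly, but that is exactly the bookkeeping you correctly identify as the remaining work.
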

\begin{proof}[Proof of lemma]
Let $\pi_i$ and $\iota_i$ be the natural projections and inclusions associated with the direct sum $A:=A_1\oplus A_2$. Note that for any morphism $f$ in $\catC$ we have $A(f)=\iota_1A_1(f)\pi_1+\iota_2A_2(f)\pi_2$. We have
\begin{align*}(A,\eta)_*(f_0[f_1|\cdots f_k])=\\
\eta \circ A(f_0)[A(f_1)|\cdots |A(f_k)]=\\
(\iota_1\eta_{11}+\iota_2\eta_{21})A_1(f_0)\pi_1[\iota_1A_1(f_1)\pi_1|\cdots |\iota_1A_1(f_k)\pi_1]+\\
(\iota_1\eta_{12}+\iota_2\eta_{22})A_2(f_0)\pi_2[\iota_2A_2(f_1)\pi_2|\cdots |\iota_2A_2(f_k)\pi_2]+\\
\text{mixed terms}.
\end{align*}
\begin{align}\label{terms}
\end{align}
For a term in (\ref{terms}) we say that the parity changes at position $l$ (for some $0\leq l\leq k$) if the $l+1$'th factor ends with a $\iota_2$ and the $l'$th factor begins with a $\pi_1$ or vice versa. When $l=k$ here then the $l+1$'th factor refers to the $0'th$ factor. So for example
$$\iota_1\eta_{11}A_1(f_0)\pi_1[\iota_2A_2(f_1)\pi_2|\iota_2A_2\pi_2]$$
changes parity at positions $0$ and $2$ and 
$$\iota_1\eta_{12}A_2(f_0)\pi_2[\iota_2A_2(f_1)\pi_2|\iota_2A_2\pi_2]$$
changes parity only at position 2.

For $n\geq 0$ and $i=1,2$ let $\phi^{n}_i$ be the sum of all the terms in $(\ref{terms})$ which changes parity at $n$ distinct positions and which has $A_i(f_0)$ in position zero. Then, each $\phi^n_i$ is a chain map and $(A_1\oplus A_2,\eta)_*=\sum_{n,i}\phi^n_i.$ We have
\begin{align*}
	\phi_i^0(f_0[f_1|\cdots f_k])=\iota_i\eta_{ii}A_i(f_0)\pi_i[\iota_i A_i(f_1)\pi_i|\cdots |\iota_i A_i(f_k)\pi_i]
\end{align*}
and this is homotopic to $(A_i,\eta_{ii})_*$ via the homotopy
\begin{align*}H_i(f_0[f_1|\cdots |f_k]):=\\\sum_{j=0}^k (-1)^{j} \eta_{ii} A_i(f_0)\pi_i[\iota_iA_i(f_1)\pi_i|\cdots |\iota_iA_i(f_j)\pi_i|\iota_i| A_i(f_{j+1})|\cdots |A_i(f_k)].\end{align*}
Next, for $n\geq 1$ we have $\phi^n_1\simeq 0$ via the homotopy
\begin{align*}H(f_0[f_1|\cdots |f_k]):=\\\sum (-1)^i\iota_\alpha\eta_{\alpha\beta}A_1(f_0)\pi_1[\iota_1A_1(f_1)\pi_1|\cdots|\iota_2| A_2(f_i)\pi_2|\cdots ]	.
\end{align*}
Here, the sum is over all the ways in which we can start with $A_1(f_0)$ and then change parity at precisely $n$ positions and then we've inserted $\iota_2$ after the last position that has $A_1$ in it. Similarly $\phi_2^n\simeq 0$ for $n\geq 1$. This proves the lemma.

\end{proof}

Suppose we have three dg categories with endofunctors $(\catC,F), (\catC',F')$ and $(\catC'',F'')$ and $(\phi,\eta):(\catC,F)\to (\catC',F')$, $(\psi,\epsilon):(\catC',F')\to (\catC'',F'')$. In this situation we define
$$(\psi,\epsilon)\circ (\phi,\eta):=(\psi\circ \phi,\epsilon\star \eta)$$
where $\epsilon\star\eta$ is the composite
$$\begin{tikzcd}
	\psi\circ \phi\circ F\arrow{r}{\psi(\eta)}&\psi\circ F'\circ \phi\arrow{r}{\epsilon_\phi}&F''\circ \psi\circ\phi.
\end{tikzcd}$$
\begin{lemma}\label{functoriality}
	We have $\big((\psi,\epsilon)\circ(\phi,\eta)\big)_*=(\psi,\epsilon)_*\circ (\phi,\eta)_*$.
\end{lemma}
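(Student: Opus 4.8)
The plan is a direct verification at the level of the standard complex; no homotopy is needed because the two chain maps in question turn out to be \emph{literally equal}. Recall that by definition $(\psi,\epsilon)\circ(\phi,\eta)=(\psi\circ\phi,\epsilon\star\eta)$, where for an object $c$ of $\catC$ the component of $\epsilon\star\eta$ is the composite
$$(\epsilon\star\eta)_c=\epsilon_{\phi(c)}\circ\psi(\eta_c)\colon \psi\phi F(c)\to F''\psi\phi(c).$$

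First I would evaluate the left-hand side on a generating chain $f_0[f_1|\cdots|f_n]\in C_n(\catC;F)$, with $f_0\in\catC(c_1,F(c_0))$ and $f_i\in\catC(c_{i+1},c_i)$ (indices read cyclically). Unwinding the definition of $(-,-)_*$ for the pair $(\psi\circ\phi,\epsilon\star\eta)$ gives
$$\big((\psi,\epsilon)\circ(\phi,\eta)\big)_*\big(f_0[f_1|\cdots|f_n]\big)=\epsilon_{\phi(c_0)}\,\psi(\eta_{c_0})\,\psi(\phi(f_0))\big[\psi(\phi(f_1))|\cdots|\psi(\phi(f_n))\big].$$
Then I would compute the right-hand side in two steps. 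Applying $(\phi,\eta)_*$ produces $\eta_{c_0}\phi(f_0)[\phi(f_1)|\cdots|\phi(f_n)]$, a chain in $C_n(\catC';F')$ on the objects $\phi(c_0),\dots,\phi(c_n)$ whose zeroth factor $\eta_{c_0}\phi(f_0)$ indeed lands in $\catC'(\phi(c_1),F'(\phi(c_0)))$ and whose $i$th factor is $\phi(f_i)$. Applying $(\psi,\epsilon)_*$ to this chain, whose zeroth object is $\phi(c_0)$, and using that $\psi$ is a dg functor (so $\psi(\eta_{c_0}\phi(f_0))=\psi(\eta_{c_0})\psi(\phi(f_0))$ and $\psi(\phi(f_i))$ on the bracketed factors), yields exactly the same expression as above.

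Since the two chain maps agree on generators and both are additive, they coincide as maps $C_\bullet(\catC;F)\to C_\bullet(\catC'';F'')$, hence induce the same map on $HH_\bullet$. The only care required is bookkeeping: keeping straight which object index decorates each instance of $\epsilon$, $\eta$, and correctly reading off the source and target of the twisted zeroth factor after the first step. There are no signs to track, because the maps $(-,-)_*$ are defined without sign on generators, and I do not anticipate any genuine obstacle.
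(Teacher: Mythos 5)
Your proof is correct and takes exactly the paper's approach: the paper's entire proof is the one-line assertion that the two maps ``are just equal already at the chain level,'' and your computation simply makes that equality explicit on generators. No issues.
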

\begin{proof}
	They are just equal already at the chain level.
\end{proof}
Now suppose we have $(\phi,\eta):(\catC,F)\to (\catC',F')$ and $\alpha:\phi\overset{\sim}{\implies}\psi$ is an isomorphism of functors. Then by abuse of notation we will denote by $\alpha\eta\alpha^{-1}$ the natural transformation $\psi\circ F\implies F'\circ \psi$ defined as the composite
$$\begin{tikzcd}
	\psi F\arrow{r}{\alpha_F^{-1}}&\phi\circ F\arrow{r}{\eta}&F'\circ \phi\arrow{r}{F'(\alpha)}&F'\circ \psi.
\end{tikzcd}$$
\begin{lemma}\label{invariance}
	We have $(\phi,\eta)_*=(\psi,\alpha\eta\alpha^{-1})_*$.
\end{lemma}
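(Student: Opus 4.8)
The plan is to prove the two induced chain maps $(\phi,\eta)_*$ and $(\psi,\alpha\eta\alpha^{-1})_*$ on the standard complex $C_\bullet(\catC;F)$ are chain homotopic, by writing the homotopy down explicitly; since $HH_\bullet(\catC;F)$ is the homology of the total complex built from $C_\bullet(\catC;F)$, this gives the equality of maps on $HH_\bullet$. First I would record the shape of the target. Unwinding the definition of $\alpha\eta\alpha^{-1}$ and using naturality of $\alpha$ on the entries $\psi(f_i)$ (cyclically, so $\psi(f_0)=\alpha_{F(c_0)}\phi(f_0)\alpha_{c_1}^{-1}$ and $\psi(f_n)=\alpha_{c_n}\phi(f_n)\alpha_{c_0}^{-1}$), one finds
$$(\psi,\alpha\eta\alpha^{-1})_*(f_0[f_1|\cdots|f_n])=F'(\alpha_{c_0})\,\eta_{c_0}\phi(f_0)\,\alpha_{c_1}^{-1}\,[\psi(f_1)|\cdots|\psi(f_n)].$$
In this guise the claim is the twisted-coefficients analogue of the statement that conjugating a Hochschild chain by a family of isomorphisms leaves its homology class unchanged, the twist being that the conjugation ``across position $0$'' passes through $F'$ (it is $F'(\alpha_{c_0})$ rather than $\alpha_{c_0}$).

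The homotopy I would write down is $h: C_n(\catC;F)\to C_{n+1}(\catC';F')$,
$$h(f_0[f_1|\cdots|f_n])=\sum_{j=0}^{n}(-1)^{j}\,F'(\alpha_{c_0})\,\eta_{c_0}\phi(f_0)\,[\phi(f_1)|\cdots|\phi(f_j)|\alpha_{c_{j+1}}^{-1}|\psi(f_{j+1})|\cdots|\psi(f_n)],$$
with the convention $c_{n+1}:=c_0$ (so the $j=0$ summand is $F'(\alpha_{c_0})\eta_{c_0}\phi(f_0)[\alpha_{c_1}^{-1}|\psi(f_1)|\cdots|\psi(f_n)]$ and the $j=n$ summand is $(-1)^n F'(\alpha_{c_0})\eta_{c_0}\phi(f_0)[\phi(f_1)|\cdots|\phi(f_n)|\alpha_{c_0}^{-1}]$). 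Note that the coefficient factor $F'(\alpha_{c_0})\eta_{c_0}\phi(f_0)$ is forced to be the same in every summand precisely so that each string composes to a legitimate element of $C_{n+1}(\catC';F')$ — the ``necklace'' closes up only because position $0$ already carries the $F'$-twisted $\alpha$-switch. Then I would check $d_2 h+h\,d_2=(\psi,\alpha\eta\alpha^{-1})_*-(\phi,\eta)_*$ by the usual simplicial-homotopy bookkeeping: the two $d_2$-faces that merge an inserted $\alpha_{c_{j+1}}^{-1}$ into a neighbouring $\phi(f)$ or $\psi(f)$ collapse it using naturality of $\alpha$ together with $\alpha^{-1}\alpha=\id$, and cancel in pairs against the corresponding faces of the summands $j\pm1$; the remaining faces of $d_2h$ cancel against $h(d_2(\,-\,))$, the faces reaching position $0$ being matched via naturality of $\eta$ (the square $\eta_c\circ\phi(F(f))=F'(\phi(f))\circ\eta_{c'}$); and the only uncancelled survivors are the $a_0a_1$-face of the $j=0$ summand, which is exactly $(\psi,\alpha\eta\alpha^{-1})_*(f_0[f_1|\cdots|f_n])$ in the form above, and the cyclic face of the $j=n$ summand, where $F'(\alpha_{c_0}^{-1})F'(\alpha_{c_0})=\id$ reduces the coefficient factor to $\eta_{c_0}\phi(f_0)$, giving $(\phi,\eta)_*(f_0[f_1|\cdots|f_n])$.

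Finally, since $\phi,\psi,\eta$ are dg functors/closed degree-$0$ transformations and the inserted $\alpha^{-1}$ is a degree-$0$ $d_1$-cycle, the operator $h$ commutes with the internal differential $d_1$ (inserting a degree-$0$ closed morphism produces no Koszul sign obstruction); combined with the $(-1)^n$ sign twist defining the total complex this makes $h$ a genuine homotopy of total complexes, so $(\phi,\eta)_*=(\psi,\alpha\eta\alpha^{-1})_*$ on $HH_\bullet$. I expect the only real work to be the sign bookkeeping: verifying that the Koszul signs from the internal degrees, the alternating signs of $d_2$, and the $(-1)^j$ in $h$ conspire so that every non-extreme face cancels in a pair and the two extreme faces combine to $(\psi,\alpha\eta\alpha^{-1})_*-(\phi,\eta)_*$ with the correct overall sign. (One could alternatively bypass the homotopy by computing $HH_\bullet(\catC;F)$ as the homology of the tensor product over $\catC\otimes\catC^{op}$ of the $F$-twisted diagonal bimodule with the bar resolution $\text{Bar}^\bullet(\catC)$, observing that $(\phi,\eta)$ and $(\psi,\alpha\eta\alpha^{-1})$ induce the same map of twisted diagonal bimodules up to the isomorphism supplied by $\alpha$, and using that $\text{Bar}^\bullet(\catC)$ is a complex of projectives to conclude the induced maps on homology agree.)
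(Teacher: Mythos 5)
Your proposal is correct and takes essentially the same approach as the paper: the paper's proof consists precisely of exhibiting such an explicit simplicial homotopy, namely $H(f_0[f_1|\cdots|f_k])=\sum_{i=0}^{k}(-1)^{i}\eta\alpha^{-1}\psi(f_0)[\psi(f_1)|\cdots|\psi(f_i)|\alpha|\phi(f_{i+1})|\cdots|\phi(f_k)]$, which is just the mirror image of yours (inserting $\alpha$ and sliding from $\psi$-entries to $\phi$-entries instead of inserting $\alpha^{-1}$ and sliding from $\phi$-entries to $\psi$-entries). Your verification of the extreme faces and of the target formula for $(\psi,\alpha\eta\alpha^{-1})_*$ is consistent with the paper's sign conventions, and is in fact more detailed than what the paper records.
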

\begin{proof}
	The following homotopy proves the equality at the level of homology
	\begin{align*}H(f_0[f_1|\cdots |f_k]):=\\\sum_{i=0}^{k} (-1)^{i}\eta\alpha^{-1}\psi(f_0)[\psi(f_1)|\cdots |\psi(f_i)|\alpha|\phi(f_{i+1})|\cdots|\phi(f_k)]	.
	\end{align*}

\end{proof}

\subsection{Künneth isomorphism}
In this section we give a proof of the Künneth isomorphism for Hochschild homology and check that it is compatible with the $S_2$ action.
Given two dg categories $\catC$ and $\catB$ we define 
	$$Sh:C_\bullet(\catC)\otimes C_\bullet(\catB)\to C_\bullet(\catC\otimes \catB)$$
	by the formula
	\begin{align*}
f_0[f_1|\cdots |f_k]\otimes g_0[g_1|\cdots |g_l]\mapsto 	\quad\quad\quad\quad\quad\quad\quad\quad\quad\quad\quad\quad\quad\quad\quad\quad\quad\quad\quad\quad\quad\quad\\\sum_{\sigma\in (k,l)-shuffles}(-1)^{|\sigma|+\epsilon} f_0\otimes g_0[\cdots |f_{i}\otimes 1|\cdots|1\otimes g_j|\cdots|f_{i+1}\otimes 1|\cdots |1\otimes g_j|\cdots].
\end{align*}
The sign in the formula above is given by 
$$\epsilon=\sum |g_t||f_s|$$
where the sum is over all pairs $(f_s,g_t)$ so that $f_s$ appears after $g_t$.
\begin{prop}\label{kunneth}
The map $Sh$ defined above induces an isomorphism
$$HH_\bullet(\catC)\otimes HH_\bullet(\catB)\to HH_\bullet(\catC\otimes \catB).$$

When $\catC=\catB$ this isomorphism commutes with the $S_2$ action. \end{prop}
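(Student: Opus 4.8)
\emph{Proof proposal.} The plan is to recognise the standard complex $C_\bullet(\catC)$ as the result of tensoring the diagonal bimodule against the bar resolution, and then to deduce the statement from the behaviour of bar resolutions under external tensor products together with the comparison theorem for projective resolutions in the exact category of Section~\ref{backgroundondg}. Two preliminary reductions make life easier. First, since $k$ is a field, every complex of $k$-vector spaces is homotopy equivalent to its homology, so $\otimes_k$ preserves quasi-isomorphisms and $H_\bullet\big(C_\bullet(\catC)\otimes C_\bullet(\catB)\big)\cong HH_\bullet(\catC)\otimes HH_\bullet(\catB)$ with no Tor correction; it therefore suffices to prove that $Sh$ is a quasi-isomorphism $C_\bullet(\catC)\otimes C_\bullet(\catB)\to C_\bullet(\catC\otimes\catB)$. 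Second, in the case $\catC=\catB$ the extra claim is that on homology $Sh$ intertwines the graded twist $x\otimes y\mapsto(-1)^{|x||y|}y\otimes x$ of the source with the automorphism of the target induced by the swap $\tau\colon\catC\otimes\catC\to\catC\otimes\catC$.

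The crucial identification is $C_\bullet(\catC)\cong\Delta^r_\catC\otimes_{\catC\otimes\catC^{\op}}\text{Bar}^\bullet(\catC)$ of total complexes: tensoring the bar resolution of $\Delta^l_\catC$ against the other diagonal identifies the first and last object $c_0$ appearing in $\text{Bar}^\bullet(\catC)$, reproducing the cyclic shape of $C_n(\catC)$, and the ``wrap-around'' term of the bar differential turns into the first and last terms of $d_2$ (matching the Koszul signs of $d_2$ is a small bookkeeping step). Granting this, one permutes tensor factors to get $(\catC\otimes\catB)\otimes(\catC\otimes\catB)^{\op}\cong(\catC\otimes\catC^{\op})\otimes(\catB\otimes\catB^{\op})$, under which $\Delta^r_{\catC\otimes\catB}\cong\Delta^r_\catC\boxtimes\Delta^r_\catB$ and $\Delta^l_{\catC\otimes\catB}\cong\Delta^l_\catC\boxtimes\Delta^l_\catB$; each $\text{Bar}^p(\catC)\boxtimes\text{Bar}^q(\catB)$ is projective by Lemma \ref{projectivemodules} applied to the tensor category; and $\text{Tot}\big(\text{Bar}^\bullet(\catC)\boxtimes\text{Bar}^\bullet(\catB)\big)$ is again a resolution of $\Delta^l_{\catC\otimes\catB}$ in the exact structure, because the external tensor of two augmented complexes that are objectwise contractible (via the contracting homotopy recalled in Section~\ref{backgroundondg}) is objectwise contractible. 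Thus $\text{Bar}^\bullet(\catC\otimes\catB)$ and $\text{Tot}\big(\text{Bar}^\bullet(\catC)\boxtimes\text{Bar}^\bullet(\catB)\big)$ are two projective resolutions of $\Delta^l_{\catC\otimes\catB}$.

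Next I would write down the Eilenberg--Zilber comparison map $\text{Tot}\big(\text{Bar}^\bullet(\catC)\boxtimes\text{Bar}^\bullet(\catB)\big)\to\text{Bar}^\bullet(\catC\otimes\catB)$: it interleaves the interior legs of the two bar strings over all shuffles, inserts identity morphisms in the complementary slots, pairs up the outer legs, and carries the Koszul sign coming from moving the $\catC$-legs past the $\catB$-legs. One checks this is a chain map of left $(\catC\otimes\catB)\otimes(\catC\otimes\catB)^{\op}$-modules lifting $\id$ on $\Delta^l_{\catC\otimes\catB}$; being a map of bounded-below complexes of projectives which is an isomorphism on homology, it is a homotopy equivalence by the comparison theorem. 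Applying $\Delta^r_{\catC\otimes\catB}\otimes_{(\catC\otimes\catB)\otimes(\catC\otimes\catB)^{\op}}(-)$ sends the source to $C_\bullet(\catC)\otimes_k C_\bullet(\catB)$, the target to $C_\bullet(\catC\otimes\catB)$, and the Eilenberg--Zilber map to $Sh$ --- the sign $\epsilon$ built into the definition of $Sh$ is precisely what makes the two formulas agree, so this identification of formulas is where the combinatorics and sign bookkeeping live. Hence $Sh$ is a homotopy equivalence, in particular a quasi-isomorphism, which gives the first assertion.

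For the $S_2$-equivariance when $\catC=\catB$, I would note that $Sh\circ(\text{twist})$ and $\tau_*\circ Sh$ both arise, after the tensoring-down above, from chain maps $\text{Tot}\big(\text{Bar}^\bullet(\catC)\boxtimes\text{Bar}^\bullet(\catC)\big)\to\text{Bar}^\bullet(\catC\otimes\catC)$ lifting $\id_{\Delta^l_{\catC\otimes\catC}}$ --- one being the Eilenberg--Zilber map, the other its precomposition with the swap of the two bar strings followed by $\tau$ --- so they are chain homotopic by uniqueness up to homotopy of comparison maps, hence induce the same map on homology. (One can instead run the sign chase directly: the evident bijection between $(k,l)$- and $(l,k)$-shuffles turns $(-1)^{|\sigma|+\epsilon}$ into exactly the twist sign; but the comparison-map argument avoids it.) The main obstacle I anticipate is precisely this sign bookkeeping --- identifying the abstract Eilenberg--Zilber map with the explicit shuffle formula, and the $S_2$ signs --- together with checking that the comparison theorem genuinely applies in the exact category at hand; the conceptual content is entirely standard Eilenberg--Zilber.
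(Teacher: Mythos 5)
Your proposal is correct and follows essentially the same route as the paper: identify $C_\bullet(-)$ with $\Delta^r\otimes_{\catC\otimes\catC^{\op}}\mathrm{Bar}^\bullet(-)$, observe that the external tensor product of the two bar resolutions is again a projective resolution of the diagonal of $\catC\otimes\catB$, conclude by the comparison theorem that the shuffle map on bar resolutions is a homotopy equivalence, and tensor down with $\Delta^r_{\catC\otimes\catB}$ to obtain $Sh$. The $S_2$-equivariance via uniqueness of comparison maps up to homotopy is also consistent with the paper's framework (the paper leaves this step implicit).
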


Before we prove the proposition we recall some facts about Hochschild homology. Just like for algebras Hochschild homology can be defined in a "basis independent" way as $\Delta_\catC^r\overset{L}{\otimes }\Delta_{\catC}^l$. As we show in the lemma below, the standard complex is then obtained by computing this derived tensor product using the bar resolution of $\Delta_\catC^l$ described in section \ref{backgroundondg}.
\begin{lemma}
For each n we have an isomorphism of chain complexes
$$C_n(\catC)\cong \Delta_{\catC}^r\otimes_{\catC\otimes \catC^{op}} \text{Bar}^n(\catC).$$
Moreover, this isomorphism is compatible with the Hochschild and Bar differentials and so it gives an isomorphism
$$C_\bullet(\catC)\cong Tot^{\oplus}(\Delta_\catC^r\otimes_{\catC\otimes\catC^{op}}Bar^\bullet(\catC)).$$	
\end{lemma}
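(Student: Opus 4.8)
**Proof proposal for Lemma (the identification $C_n(\catC)\cong \Delta_{\catC}^r\otimes_{\catC\otimes\catC^{op}}\mathrm{Bar}^n(\catC)$).**

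The plan is to unwind both sides of the claimed isomorphism as explicit direct sums of tensor products of morphism complexes, exhibit the obvious identification, and then check that it intertwines the two differentials. First I would compute the right-hand side. By definition $\mathrm{Bar}^n(\catC)$ is the left $\catC\otimes\catC^{op}$-module sending $(x,y)$ to $\bigoplus_{c_0,\dots,c_n}\catC(c_0,x)\otimes\catC(c_1,c_0)\otimes\cdots\otimes\catC(y,c_n)$, and $\Delta^r_\catC$ is the right $\catC\otimes\catC^{op}$-module $(c_1,c_2)\mapsto\catC(c_1,c_2)$. Applying the coequalizer formula for $M\otimes_{\catC\otimes\catC^{op}}N$, the tensor product over the ``outer'' variables $(x,y)$ collapses the pair $(x,y)=(c_n,c_0)$ against $\catC(c_n,c_0)$ coming from $\Delta^r_\catC$; concretely, an element is represented by $h\otimes(f_0|f_1|\cdots|f_n)$ with $h\in\catC(c_n,c_0)$, $f_0\in\catC(c_0,x)$, $\dots$, $f_n\in\catC(y,c_n)$, modulo the relations moving morphisms from $\catC(x,x')$ and $\catC(y',y)$ across. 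Cancelling the $x,y$ variables leaves exactly $\bigoplus_{c_0,\dots,c_n}\catC(c_n,c_0)\otimes\catC(c_1,c_0)\otimes\cdots\otimes\catC(c_0,c_n)$, which — after matching indices with the definition of $C_n(\catC)=C_n(\catC;\id)$, i.e. reading $f_0\in\catC(c_1,c_0)$ etc. and the wrap-around factor $\catC(c_0,c_n)$ — is precisely $C_n(\catC)$. I would write out this bijection on generators explicitly so there is no ambiguity about which morphism complex plays which role.

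Next I would verify compatibility of differentials. There are two: the ``Bar'' (horizontal, $d^n$-type) differential which, after tensoring with $\Delta^r_\catC$, becomes the alternating sum of compositions $f_i f_{i+1}$ for $1\le i\le n-1$ together with the two boundary terms $f_0 f_1$ and the wrap-around $f_n f_0$ absorbed into the $\Delta^r$-factor; and the internal ($d_1$-type) differential coming from the fact that each $\catC(c_i,c_{i+1})$ is itself a complex. The internal one is visibly preserved. For the horizontal one, the key point is that the last face map $d^n$ of the bar complex, $f_0|\cdots|f_{n+1}\mapsto\cdots+(-1)^{n+1}f_{n+1}f_0|\cdots$, when pushed through the coequalizer with $\Delta^r_\catC$, produces the term $F(a_n)a_0[\cdots]$ of the Hochschild differential with $F=\id$ — and here I would double-check that the Koszul sign $(-1)^{n+|a_n|(|a_0|+\cdots+|a_{n-1}|)}$ in the definition of $d_2$ matches the sign produced by moving $f_n$ around the cyclic word past the other factors, which is exactly the sign bookkeeping the $\otimes_{\catC\otimes\catC^{op}}$ forces. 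The ``altering the sign of $d_1$ by $(-1)^n$'' clause in the definition of $HH$ is precisely the standard total-complex sign, so the totalization $Tot^\oplus$ on the right matches the total complex defining $C_\bullet(\catC)$.

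The main obstacle I expect is the sign verification in the wrap-around term: getting the Koszul signs on the $\Delta^r$-side to agree with the explicitly chosen sign convention in the definition of $d_2$ requires careful tracking of the degrees $|a_i|$ as the terminal factor is cyclically permuted through the tensor word, and it is easy to be off by a sign that is invisible in the ungraded (algebra) case. Everything else — the identification of underlying graded objects, the absence of higher coequalizer corrections (the relations in $M\otimes_{\catC\otimes\catC^{op}}N$ only identify, they don't add new differential terms), and the internal differential — is routine bookkeeping. I would therefore present the underlying isomorphism quickly and devote the bulk of the argument to the commutativity of the square relating $d^n$ (after $\otimes\,\Delta^r_\catC$) and $d_2$, checking the endpoints $i=0$ and $i=n$ of the alternating sum with explicit signs and noting the middle terms $1\le i\le n-1$ match term-by-term with no sign subtlety.
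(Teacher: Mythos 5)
Your proposal is correct and follows essentially the same route as the paper: unwind the coequalizer defining $\Delta^r_\catC\otimes_{\catC\otimes\catC^{op}}\mathrm{Bar}^n(\catC)$, absorb the outer variables into the $\Delta^r$-factor to identify the underlying graded object with $C_n(\catC)$, and track the Koszul sign on the cyclically permuted terminal factor — which is exactly the sign $(-1)^{|b_{n+1}|(|b_0|+\cdots+|b_n|+|f|)}$ appearing in the paper's explicit formula. The only nitpick is an indexing slip: $\mathrm{Bar}^n(\catC)$ has $n+2$ tensor factors, so the representative should be $h\otimes(f_0|\cdots|f_{n+1})$ with \emph{both} $f_0$ and $f_{n+1}$ absorbed into $h$, leaving $n+1$ factors as required for $C_n(\catC)$.
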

\begin{proof}
	The complex on the right is the cokernel of 
	$$\bigoplus_{(x,y),(x',y')}\catC(x,y)\otimes(\catC(y',y)\otimes \catC(x,x'))\otimes \text{Bar}^n(\catC)(y',x') \to\bigoplus_{(x,y)}\catC(x,y)\otimes \text{Bar}^n(y,x)$$
	$$f\otimes (\alpha\otimes \beta)\otimes B\mapsto (-1)^{|\alpha||f|}\alpha f\beta\otimes B-(-1)^{(|B|+|\beta|)|\alpha|}f\otimes \beta B\alpha.$$
	The isomorphism is given induced by
	$$\bigoplus_{(x,y)}\catC(x,y)\otimes \text{Bar}^n(y,x)\to C_n(\catC)$$
	$$f\otimes b_0|b_1|\cdots|b_{n+1}\mapsto (-1)^{|b_{n+1}|(|b_0|+...+|b_n|+|f|)}b_{n+1}fb_0[b_1|\cdots b_n]$$
	and its inverse sends $a_0[a_1|\cdots|a_n]$ to the image of $a_0\otimes 1|a_1|...|a_n|1$ in the cokernel.
\end{proof}

If $M$ is a right (or left) $\catC$ module and $N$ is a right (or left) $\catB$ module then we can form their tensor product over $k$ which will be a right (or left) module over $\catC\otimes \catB$
$$(M\otimes_k N)(c,b):=M(c)\otimes N(b).$$
Note that tensor product of representable modules is again representable. More generally if $M$ and $N$ are projective modules of the special type described in lemma \ref{projectivemodules} then their tensor product is again of that form.
\begin{lemma}
Let $\catC$ and $\catB$ be two small dg categories. We have a homotopy equivalence of left $\catC\otimes \catB\otimes \catC^{op}\otimes \catB^{op}$-modules
$$Sh':\text{Bar}(\catC)\otimes \text{Bar}(\catB)\to \text{Bar}(\catC\otimes \catB).$$	
\end{lemma}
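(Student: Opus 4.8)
The plan is to recognize both complexes appearing in the statement as projective resolutions, in the exact structure of Section~\ref{backgroundondg}, of one and the same bimodule --- the diagonal bimodule $\Delta^l_{\catC\otimes\catB}$ --- and then invoke uniqueness of projective resolutions to produce $Sh'$.

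First I would pin down the common augmentation target. Tensoring over $k$ is compatible with the formation of the diagonal bimodule: evaluating $\Delta^l_\catC\otimes_k\Delta^l_\catB$ at $((c_1,b_1),(c_2,b_2))$ gives $\catC(c_2,c_1)\otimes_k\catB(b_2,b_1)=(\catC\otimes\catB)((c_2,b_2),(c_1,b_1))$, i.e.\ $\Delta^l_{\catC\otimes\catB}$. Consequently the totalization $\text{Tot}^{\oplus}\big(\text{Bar}(\catC)\otimes_k\text{Bar}(\catB)\big)$, with the augmentation assembled from $d^0$ on each tensor factor, is an augmented complex over $\Delta^l_{\catC\otimes\catB}$, exactly like $\text{Bar}(\catC\otimes\catB)$.

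Next I would verify that this totalization is a projective resolution. Projectivity of each term $\text{Bar}^p(\catC)\otimes_k\text{Bar}^q(\catB)$ is precisely the remark preceding the statement: the $k$-tensor product of two modules of the special form of Lemma~\ref{projectivemodules} is again of that form, hence projective. For acyclicity, evaluating the augmented totalization at $((x,x'),(y,y'))$ produces the totalized $k$-tensor product of the augmented bar complex of $\catC$ at $(x,y)$ with that of $\catB$ at $(x',y')$; the former is contractible via the (non-natural) homotopy $f_0|\cdots|f_{n+1}\mapsto\id_x|f_0|\cdots|f_{n+1}$, and since $(-)\otimes_k(-)$ carries chain homotopies to chain homotopies, the tensor product stays contractible. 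A contractible complex of vector spaces is exact with vanishing homology, so the augmented totalization meets both conditions defining exactness in our exact structure. That the contracting homotopies are not natural is irrelevant, since the exact structure is tested object by object.

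Having exhibited $\text{Tot}^{\oplus}\big(\text{Bar}(\catC)\otimes_k\text{Bar}(\catB)\big)$ and $\text{Bar}(\catC\otimes\catB)$ as projective resolutions --- bounded below in bar degree --- of the same module in an exact category with enough projectives, the standard comparison theorem produces morphisms of complexes of $\catC\otimes\catB\otimes\catC^{op}\otimes\catB^{op}$-modules in both directions lifting $\id_{\Delta^l_{\catC\otimes\catB}}$, mutually inverse up to chain homotopy; this gives the claimed homotopy equivalence $Sh'$, automatically module-linear. If one wants $Sh'$ to be the explicit Eilenberg--Zilber shuffle, so that its relation to the map $Sh$ above is transparent, one writes it down by hand instead: keep the outer ``module'' factors $f_0\otimes g_0$ and $f_{n+1}\otimes g_{m+1}$ fixed and shuffle the middle entries $f_i\otimes 1$ against the $1\otimes g_j$ with the usual shuffle signs; then one checks directly that this is a bimodule-linear chain map intertwining the two-sided bar differentials and compatible with the augmentations, after which its mapping cone is a bounded-below exact complex of projectives, hence contractible, so $Sh'$ is a homotopy equivalence for free. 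The main obstacle in either route is sign bookkeeping: in the first, the Koszul signs needed to confirm that the tensor of the two contracting homotopies contracts the totalized complex; in the second, the full Eilenberg--Zilber computation that the shuffle map commutes with the bar differentials and respects the four-sided module structure.
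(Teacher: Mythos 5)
Your proposal is correct and matches the paper's argument: the paper writes down the explicit shuffle map $Sh'$, checks it is a chain map over the augmentation to the diagonal bimodule of $\catC\otimes\catB$, and then concludes it is a homotopy equivalence precisely because both sides are resolutions of that bimodule by projective $\catC\otimes\catB\otimes\catC^{op}\otimes\catB^{op}$-modules. Your only addition is to spell out why $\text{Bar}(\catC)\otimes\text{Bar}(\catB)$ is itself such a resolution (tensor of the special projectives, tensor of the contracting homotopies), and you rightly note that the explicit shuffle formula, not just an abstract comparison map, is what is needed for the Künneth map $Sh$ downstream.
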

\begin{proof}
The map is defined by 
\begin{align*}\Big(\bigoplus_{c_1,...,c_{p+1}}\catC(c_1,-)\catC(c_2,c_1)\cdots \catC(-,c_{p+1})\Big)\otimes \Big(\bigoplus_{b_1,...,b_{q+1}}\catB(b_1,-)\catC(b_2,b_1)\cdots \catB(-,b_{q+1})\Big)\\
\longrightarrow \bigoplus_{(c_1,b_1),...,(c_{p+q+1},b_{p+q+1})}\catC\catB((c_1,b_1),-)\catC\catB((c_2,b_2),(c_1,c_1))\cdots \catC\catB(-,(c_{p+q+1},b_{p+q+1}))\\
f_0|f_1|\cdots| f_p\otimes g_0| g_1|\cdots | g_q 
\longmapsto \\(-1)^{\epsilon_1} \sum(f_0\otimes g_0 )| \cdots|(f_i\otimes1)|\cdots |(1\otimes g_j)|\cdots  |(f_{i+1}\otimes 1)|\cdots |(1\otimes_{g_{j+1}})|\cdots| (f_p\otimes g_q)
\end{align*}
where the sum is over all $(p-1,q-1)-suffles$ $\sigma.$
 The signs are given by
\begin{align*}&\epsilon_1=|g_0|(|f_1|+...+|f_p|)+|f_p|(|g_1|+...+|g_q|)\\&\epsilon_2= |\sigma|+\sum |f_i||g_j|
\end{align*}
where the sum is over all pairs $(f_i,g_j)$ such that $g_j$ appears before $f_i$.
This is a chain map which fits into a diagram
$$\begin{tikzcd}
\text{Bar}(\catC)\otimes \text{Bar}(\catB)\arrow{d}{\sim}\arrow{r}{Sh'}&\text{Bar}(\catC\otimes \catB)\arrow{d}{\sim}\\
\catC\otimes \catB \arrow{r}{=}&	\catC\otimes \catB
\end{tikzcd}
$$
Because both $\text{Bar}(\catC)\otimes \text{Bar}(\catB)$ and $\text{Bar}(\catC\otimes \catB)$ form resolutions of $\catC\otimes \catB$ by projective  $\catC\otimes \catB\otimes \catC^{op}\otimes \catB^{op}$-modules $Sh'$ is a homotopy equivalence. 
\end{proof}
\begin{proof}[Proof of proposition \ref{kunneth}]
	The isomorphism is obtained by tensoring the homotopy equivalence $Sh'$ from the previous lemma on the left with $\Delta_{\catC\otimes \catB}^r$.
\end{proof}

\section{Hochschild homology of equivariant category}
Now we again consider an action of a finite group $G$ on a small dg category $\catC$. Let $g\in G$ and denote by $C(g)$ the subgroup of elements that commute with $g$. Then, for any $h\in C(g)$ we have a natural isomorphism
$$C_{h,g}:=\theta_{g,h}^{-1}\circ\theta_{h,g}:\rho_h\circ\rho_g\implies \rho_g\circ \rho_h$$
and they induce isomorphisms
$$(\rho_h,C_{h,g})_*:HH_\bullet(\catC,\rho_g)\overset{\sim}{\to}HH_\bullet(\catC,\rho_g).$$
\begin{lemma}
The assignment $h\mapsto (\rho_h,C_{h,g})_*$ defines a right action of $C(g)$ on $HH_\bullet(\catC,\rho_g).$	
\end{lemma}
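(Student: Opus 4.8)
The plan is to verify the two axioms of a right action: that the identity element $e \in C(g)$ acts as the identity, and that $(\rho_h, C_{h,g})_* \circ (\rho_k, C_{k,g})_* = (\rho_{kh}, C_{kh,g})_*$ for all $h, k \in C(g)$. The main tool is Lemma \ref{functoriality}, which says that the assignment $(\phi, \eta) \mapsto (\phi, \eta)_*$ is functorial with respect to the composition $(\psi, \epsilon) \circ (\phi, \eta) = (\psi \circ \phi, \epsilon \star \eta)$, together with Lemma \ref{invariance}, which says $(\phi, \eta)_*$ only depends on $(\phi, \eta)$ up to natural isomorphism of $\phi$.

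For the identity axiom, I would use that $\eta: \rho_e \overset{\sim}{\implies} \id_\catC$ together with condition (i) of Definition \ref{groupaction} to check that $(\rho_e, C_{e,g})_*$ equals $(\id_\catC, \id_{\rho_g})_*$, which is the identity map on $C_\bullet(\catC; \rho_g)$ at the chain level. Concretely, $C_{e,g} = \theta_{g,e}^{-1} \circ \theta_{e,g}$, and condition (i) identifies $(\theta_{e,g})_c = \eta_{\rho_g(c)}$ and $(\theta_{g,e})_c = \rho_g(\eta_c)$; applying Lemma \ref{invariance} with $\alpha = \eta$ then shows $(\rho_e, C_{e,g})_* = (\id, \id)_* = \id$.

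For the composition axiom, by Lemma \ref{functoriality} we have $(\rho_h, C_{h,g})_* \circ (\rho_k, C_{k,g})_* = (\rho_h \circ \rho_k, C_{h,g} \star C_{k,g})_*$, so it suffices to show that under the natural isomorphism $\theta_{k,h}: \rho_h \circ \rho_k \overset{\sim}{\implies} \rho_{kh}$ the natural transformation $C_{h,g} \star C_{k,g}$ is carried to $C_{kh,g}$ in the sense of Lemma \ref{invariance}; then that lemma gives $(\rho_h \circ \rho_k, C_{h,g} \star C_{k,g})_* = (\rho_{kh}, C_{kh,g})_*$. Unwinding definitions, $C_{h,g} \star C_{k,g}$ is the composite $\rho_h \rho_k \rho_g \xrightarrow{\rho_h(C_{k,g})} \rho_h \rho_g \rho_k \xrightarrow{(C_{h,g})_{\rho_k}} \rho_g \rho_h \rho_k$, and I would expand each $C_{\bullet,g}$ as $\theta_{g,\bullet}^{-1} \circ \theta_{\bullet,g}$ and chase the resulting diagram using the hexagon/associativity condition (ii) of Definition \ref{groupaction} (applied several times, with suitable substitutions of group elements), to match it with $\theta_{g,kh} C_{kh,g} \theta_{g,kh}^{\dots}$ — more precisely to match $\theta_{k,h} \circ (C_{h,g}\star C_{k,g}) \circ (\text{transport along } \theta)$ with $C_{kh,g}$.

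The main obstacle is purely bookkeeping: carefully composing the instances of $\theta$ and tracking which associativity square from condition (ii) to invoke, being careful about the left-action convention on the $\theta_{g,g'}$ (note $\theta_{g,g'}: \rho_g \rho_{g'} \implies \rho_{g'g}$, so indices compose in the "wrong" order) and about the fact that this is a \emph{right} action of $C(g)$, which is exactly why the target of the composite comes out as $\rho_{kh}$ rather than $\rho_{hk}$. No homotopies beyond those already supplied by Lemmas \ref{functoriality} and \ref{invariance} should be needed — the content is entirely in the coherence diagram (ii), and once that is set up correctly the verification is a finite diagram chase.
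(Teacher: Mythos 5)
Your proposal is correct and follows essentially the same route as the paper: apply Lemma \ref{functoriality} to get $(\rho_h\circ\rho_k, C_{h,g}\star C_{k,g})_*$, then transport along $\theta_{k,h}$ via Lemma \ref{invariance} after checking $\theta_{k,h}(C_{h,g}\star C_{k,g})\theta_{k,h}^{-1}=C_{kh,g}$ from the coherence condition (ii). You additionally verify the identity axiom via condition (i), which the paper leaves implicit.
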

\begin{proof}
We have
\begin{align*}(\rho_h,C_{h,g})\circ(\rho_{h'},C_{h',g})=(\rho_h\circ\rho_{h'},(C_{h,g})_{\rho_{h'}}\circ\rho_{h}(C_{h',g})).
\end{align*}
and $\theta_{h,h'}:\rho_{h}\circ\rho_{h'}\overset{\sim}{\implies}\rho_{h'h}$ and $\theta_{h,h'}(C_{h,g}\star C_{h',g})\theta_{h,h'}^{-1}=C_{h'h,g}$. It then follows from lemmas $\ref{functoriality}$ and $\ref{invariance}$ that $(\rho_h,C_{h,g})_*\circ (\rho_{h'},C_{h',g})_*=(\rho_{h'h},C_{h'h,g})_*$.

\end{proof}

The tensor product functor $\text{Rep}(G)\times \catC^G\to \catC^G$ makes $HH_\bullet(\catC^G)$ into a module over the ring $R_k(G):=R(G)\otimes_\Z k$ where $R(G)$ is the representation ring of $G$. When $k=\C$ it follows from character theory that this ring is nothing but the ring of complex valued functions on the set of conjugacy classes of $G$. Denote by $\varphi_{[g]}\in R_\C(G)$ the indicator function on the conjugacy class of $g$. Note that the $\varphi_{[g]}$ form a collection of pairwise orthogonal idempotents which sum to the identity. Therefore, any $R_\C(G)$-module $M$ decomposes as $\bigoplus_{[g]\in G/\sim}\varphi_{[g]}\cdot M$.

Now we are ready to state and prove our main theorem.
\begin{theorem}\label{mainthm}
	Let $G$ be a finite group which acts on a small dg category $\catC$ and assume that $\catC$ has finite direct sums. Then we have
	$$HH_\bullet(\catC^G)\cong\bigoplus_{[g]\in G/\sim}HH_\bullet(\catC;\rho_g)^{^{C(g)}}$$
	where $\sim$ refers to conjugacy. Moreover, when the ground field is $\C$, this isomorphism identifies $HH_\bullet(\catC;\rho_g)^{C(g)}$ with the submodule $\varphi_{[g]}\cdot HH_\bullet(\catC^G)\subset HH_\bullet(\catC^G)$.
\end{theorem}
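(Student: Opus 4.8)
The plan is to build the two maps in the decomposition from the functors $For\colon \catC^G\to\catC$ and $S\colon\catC\to\catC^G$, and to show they are inverse to each other after passing to the appropriate invariants and summing over conjugacy classes. Fix a set of representatives $g$ for the conjugacy classes of $G$. The target $HH_\bullet(\catC;\rho_g)$ carries the right $C(g)$-action $h\mapsto(\rho_h,C_{h,g})_*$ constructed just above the theorem; the first task is to produce, for each $g$, a chain-level map $HH_\bullet(\catC^G)\to HH_\bullet(\catC;\rho_g)$ landing in the $C(g)$-invariants, and a map back, using that $HH_\bullet(-)$ is a functor of pairs $(\catC,F)$ via Lemma \ref{functoriality}. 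The natural transformations $S\circ\rho_g\cong S$ and $\rho_g\circ For\cong For$ advertised in the introduction are what upgrade $For$ and $S$ to maps of pairs $(\catC^G,\id)\to(\catC,\rho_g)$ and $(\catC,\rho_g)\to(\catC^G,\id)$ respectively: explicitly, an equivariant object $(c,\alpha)$ comes with $\alpha_g\colon c\to\rho_g(c)$, so $For$ together with $\{\alpha_g\}$ gives $(For,\alpha_g)\colon(\catC^G,\id)\to(\catC,\rho_g)$, while on $S(c)=\bigoplus_h\rho_h(c)$ the structure maps $\theta^{-1}$ give a natural iso of $S\circ\rho_g$ with $S$, yielding $(S,-)\colon(\catC,\rho_g)\to(\catC^G,\id)$. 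Define $P_g:=(For,\alpha)_*$ and $I_g:=(S,-)_*$ and let $P:=\bigoplus_g P_g$, $I:=\sum_g I_g$.

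The next step is to show $P_g$ actually lands in the $C(g)$-invariants and that $I_g$ kills everything outside them, so that $P$ and $I$ restrict to maps to and from $\bigoplus_{[g]}HH_\bullet(\catC;\rho_g)^{C(g)}$. Landing in invariants is a consequence of Lemma \ref{invariance} applied to the isomorphism $\alpha_h$ relating $For$ and $\rho_h\circ For$ on equivariant objects: conjugating $(For,\alpha)$ by $\alpha_h$ recovers the same map, which precisely says $(\rho_h,C_{h,g})_*\circ P_g=P_g$ for $h\in C(g)$. The averaging identity $P\circ I=\mathrm{id}$ (after dividing by $|G|$, which is invertible in characteristic zero) and $I\circ P=\mathrm{id}$ is where the real computation lives. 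For $I_g\circ P_g$ one uses Proposition \ref{SFor}, $For\circ S\cong\bigoplus_h\rho_h$, together with Lemma \ref{tracedecomposition}, which says that a map induced by a direct sum of functors with a matrix of natural transformations equals, up to homotopy, the sum of the diagonal contributions. This reduces the composite $(S,-)_*\circ(For,\alpha)_*$ — which a priori involves the functor $\bigoplus_h\rho_h$ — to $\sum_h$ of the trace-type terms $(\rho_h,-)_*$; only the terms where $h$ is conjugate to $g$ survive into the $g$-component, and collecting them over a conjugacy class and using $C(g)$-invariance produces the factor $|G|/|C(g)|\cdot|C(g)|=|G|$. Symmetrically $P\circ I$ is handled with $S\circ For\cong k[G]\otimes(-)$.

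I expect the combinatorial bookkeeping of the composite $I\circ P$ to be the main obstacle: one must carefully track how the natural transformations $\theta$, the structure maps $\alpha_h$, and the index $h\in G$ interact when Lemma \ref{tracedecomposition} splits off the diagonal, and confirm that the off-diagonal ``mixed terms'' there are exactly the ones that vanish or reassemble into the conjugation action, so that what remains is precisely the projector onto $HH_\bullet(\catC;\rho_g)^{C(g)}$. Finally, for the last sentence of the theorem, when $k=\C$ one identifies the composite $I_g\circ P_g$ (suitably normalized) with the action of the central idempotent $\varphi_{[g]}\in R_\C(G)$ on $HH_\bullet(\catC^G)$: concretely, $\otimes\colon\mathrm{Rep}(G)\times\catC^G\to\catC^G$ sends the regular representation to $S\circ For$, and a character computation shows that acting by $\varphi_{[g]}$ is the class function picking out the $g$-isotypic piece, which by the above is exactly $I_g\circ P_g/|G|$-times a constant; comparing the two idempotents term by term, using that the $\varphi_{[h]}$ are orthogonal and sum to $1$, gives the claimed identification of $HH_\bullet(\catC;\rho_g)^{C(g)}$ with $\varphi_{[g]}\cdot HH_\bullet(\catC^G)$.
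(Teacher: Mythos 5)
Your proposal follows essentially the same route as the paper: the projections $(For,\alpha_g)_*$ and inclusions built from $S$ with the $\theta$-twists, Lemma \ref{invariance} for landing in $C(g)$-invariants, Lemma \ref{tracedecomposition} together with Proposition \ref{SFor} to evaluate the two composites, and the $\mathrm{Rep}(G)$-action identifying $\iota_g\pi_g$ with $\varphi_{[g]}$ via a character computation. Only minor bookkeeping differs from the paper: you have the two composites swapped ($P\circ I$ is the one governed by $For\circ S\cong\bigoplus_h\rho_h$ and $I\circ P$ by $S\circ For\cong k[G]\otimes(-)$), the diagonal terms that survive in $P_g\circ I_g$ are those with $h$ \emph{centralizing} $g$ rather than conjugate to it, and the correct normalization is $\pi_g\iota_g=|C(g)|\cdot\id$ with $\sum_{[g]}\iota_g\pi_g/|C(g)|=\id$ (equivalently $|G|$ if one sums over all of $G$).
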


\begin{proof}
We will first define inclusions and projections $\iota_g, \pi_g$ and show that they give rise to a decomposition $HH_\bullet(\catC^G)\cong \bigoplus_{[g]\in G/\sim} HH_\bullet(\catC,\rho_g)^{C(g)}$	. Then we check that the image of $\iota_g\circ \pi_g$ is precisely the submodule $\varphi_{[g]}HH_\bullet(\catC^G)\subset HH_\bullet(\catC^G)$.

Let $For:\catC^G\to \catC$ and $S:\catC\to \catC^G$ be the functors introduced in section \ref{Finite group actions on dg categories}. We define $\pi_g=(For,\alpha_g)_*:HH_\bullet(\catC^G)\to HH_\bullet(\catC;\rho_g)$ where (by abuse of notation) $\alpha_g$ denotes the natural isomorphism $For\overset{\sim}{\implies }\rho_g\circ For$ whose value at an equivariant object $(c,\alpha)$ is $\alpha_g:c\to \rho_g(c)$.

\textit{Claim 1:} The map $\pi_g$ lands in $HH_\bullet(\catC;\rho_g)^{C(g)}$.\\
\textit{Proof of claim 1:} 
We consider $(\rho_h,C_{h,g})\circ (For,\alpha_g)=(\rho_h\circ For,C_{h,g}\star\alpha_g)$ and note that $\alpha_h:For\overset{\sim}{\implies}\rho_h\circ For$ and $C_{h,g}\star\alpha_g=\alpha_h \alpha_g\alpha_h^{-1}$. The claim then follows from lemmas \ref{functoriality} and \ref{invariance}.

Let us denote by $\hat{S}:=For\circ S$ and recall from proposition \ref{SFor} that it is isomorphic (or in fact equal) to $\bigoplus_{g\in G}\rho_g$. Now define $\iota_g:HH_\bullet(\catC;\rho_g)^{C(g)}\to HH_\bullet(\catC^G)$ by 
$$HH_\bullet(\catC;\rho_g)^{C(g)}\to HH_\bullet(\catC;\rho_g)\overset{S_g}{\to}HH_\bullet(\catC^G)$$
where $S_g=(S,\phi_{g})_*$ and $\phi_g:S(\rho_g(x_0))\to S(x_0)$ is the natural isomorphism 
$$(\phi_g)_{hh'}=\delta_{gh'=h}\theta_{h',g}.$$\\

\textit{Claim 2:} $\pi_g\circ \iota_g\simeq |C(g)|\cdot \id$.\\
\textit{Proof of claim 2:} Note that $\pi_g\circ S_g=(For\circ S,\alpha_g\star\phi_g)$
and $\alpha_g\star\phi_g:\hat{S}(\rho_g(x_0))\to \rho_g(\hat{S}(x_0))$ is give by $(\alpha_g\star\phi_g)_{hh'}=\delta_{hg=gh'}\theta_{g,h}^{-1}\circ \theta_{h',g}$. By lemma \ref{tracedecomposition} we see that $\pi_g\circ \tilde{S}=\sum_{h\in C(g)}(\rho_h,C_{h,g})_*$ and from this the claim follows.\\

\textit{Claim 3}: $\pi_g\circ \iota_{g'}\simeq 0$ if $g\nsim g'$.\\
This also follows from lemma \ref{tracedecomposition}.\\

\textit{Claim 4:} If $g$ and $g'$ are conjugate then $\iota_g\pi_g\simeq\iota_{g'}\pi_{g'}$.\\
\textit{Proof of claim 4:}
Suppose $g'=h^{-1}gh. $Consider the diagram 
$$\begin{tikzcd}
	&&(\catC,\rho_g)\arrow{dd}{(\rho_h,\theta_{g',h}^{-1}\circ \theta_{h,g})}\arrow{rrd}{(S,\phi_g)}&&\\
	(\catC^G,\id)\arrow{rru}{(F,\alpha_g)}\arrow{rrd}{(F,\alpha_{g'})}&&&&(\catC^G,\id)\\
	&&(\catC,\rho_{g'})\arrow{rru}{(S,\phi_{g'})}.&&
\end{tikzcd}$$
 We have a natural isomorphism $\alpha_h:F\implies \rho_h\circ F$ and
$$(\rho_h,\theta_{g',h}^{-1}\circ \theta_{h,g})\circ (F,\alpha_g)=(\rho_h\circ F,\rho_{g'}(\alpha_h)\circ \alpha_{g'}\circ \alpha_h^{-1}).$$

Also, we have a natural isomorphism $\phi_h:S\circ \rho_h\implies S$ and 
$$(S,\phi_{g'})\circ (\rho_h,\theta_{g',h}^{-1}\circ \theta_{h,g})=(S\circ \rho_h, \phi_h^{-1}\circ \phi_g\circ (\phi_h)_{\rho_g}).$$
Now using lemmas $\ref{functoriality}$ and $\ref{invariance}$ we get
\begin{align*}\iota_g\pi_g &=(S,\phi_g)_*\circ (F,\alpha_g)_*\\&=(S,\phi_{g'})_*\circ  (\rho_h,\theta_{g',h}^{-1}\circ \theta_{h,g})_*\circ (F,\alpha_g)_*\\	
&=(S,\phi_{g'})_*\circ (F,\alpha_{g'})_*=\iota_{g'}\circ \pi_{g'}.
\end{align*}

\textit{Claim 5:}
$$\sum_{[g]\in G/\sim}\frac{\iota_g\circ\pi_g}{|C(g)|}=\id_{HH_\bullet(\catC^G)}$$\\
\textit{Proof of claim 5:} By the previous claim we have
\begin{align*}
\sum_{[g]\in G/\sim}\frac{\iota_g\circ\pi_g}{|C(g)|}&=\sum_{[g]\in G/\sim}|conj(g)|\frac{\iota_g\circ\pi_g}{|G|}\\
&=\frac{1}{|G|}\sum_{g\in G}\iota_g\circ \pi_g.
\end{align*}
Next we note that
\begin{align*}
	\iota_g\circ \pi_g=(S,\phi_g)_*\circ(For,\alpha_g)_*=(S\circ For,\phi_g\star \alpha_g)_*
\end{align*}
and therefore 
\begin{align*}
	\sum_{g\in G}\iota_g\circ \pi_g=(S\circ For,\sum_{g\in G}\phi_g\star\alpha_g).
\end{align*}
Moreover 
$$\sum_{g\in G}\phi_g\star\alpha_g:S\circ For\implies S\circ For$$
is given in matrix form by 
$$\Big(\sum_{g\in G}\phi_g\star\alpha_g\Big)_{h'h}=\alpha_{h'}\circ \alpha_h^{-1}=(I\circ P)_{h'h}$$
where  
$$I:\id_{\catC^G}\implies S\circ For, \ P:S\circ For\implies \id_{\catC^G}$$
are the inclusion- and projection-natural transformation associated with the identity component of $\triv \otimes (-)\subset k[G]\otimes (-)$ (see proposition \ref{SFor}). Let $S'=S\circ For$. Now the following homotopy completes the proof of the claim.
\begin{align*}H(f_0[f_1|\cdots |f_k)=\\
\sum_{i=0}^k (-1)^{i}P\circ S'(f_0)[S'(f_0)|\cdots |S'(f_i)|I|f_{i+1}|\cdots f_k] 	.
\end{align*}

It remains to prove the second part of the theorem, that when the ground field is $\C$ we have $\iota_g\circ\pi_g(HH_\bullet(\catC^G))=\varphi_{[g]}\cdot HH_\bullet(\catC^G)$.
Since we know now that $HH_\bullet(\catC^G)=\bigoplus_{[g]\in G/\sim}\iota_g\circ\pi_g(HH_\bullet(\catC^G))$ and $HH_\bullet(\catC^G)=\bigoplus_{[g]\in G/\sim}\varphi_{[g]}\cdot HH_\bullet(\catC^G))$ it suffices to prove that $\iota_g\circ\pi_g(HH_\bullet(\catC^G))\subset\varphi_{[g]}\cdot HH_\bullet(\catC^G)$ for each conjugacy class $[g]$. Note that for any $R_\C(G)$ module $M$ the we have that
$$\varphi_{[g]}\cdot M=\{m\in M|[V]\cdot m=\chi_V(g)\cdot m\text{ for any representation }V\}.$$
Let $(V,\rho_V)$ be a representation with basis $\{e_1,...,e_n\}$. Let $T_V:=V\otimes(-):\catC^G\to \catC^G$. Then, multiplication by $[V]$ on Hochschild homology is the same as $(T_V,1)_*$. But note that using proposition \ref{SFor} we have the following isomorphism of functors
\begin{align}\label{decomp}T_V\circ S\circ For\cong T_V\circ T_{k[G]}\cong T_{k[G]}\circ T_V=S\circ For\circ T_V\cong \oplus_{i=1}^nS\circ For.
\end{align}
Call the composite of these natural isomorphisms $\tau$. It is given in matrix form by
$$\tau_{(i,k),(j,h)}=\delta_{h=k}\rho_V(k)_{ij}$$ 
where $1\leq i,j\leq n$ and $h,k\in G$. Let $I_j:S\circ For\implies T_V\circ S\circ For$ and $P_j:T_V\circ S\circ For\implies S\circ For$ be the inclusion and projection-natural transformations associated with the decomposition (\ref{decomp}) above. We have
$$P_j\circ T_V(\phi_g\star \alpha_g)\circ I_j=\rho_V(g)_{jj}\cdot(\phi_g\star\alpha_g).$$
We then have
\begin{align*}
	(T_V,1)_*\circ \iota_g\circ \pi_g=&(T_V,1)_*\circ (S,\phi_g)_*\circ (For,\alpha_g)_*\\
	=& (T_V\circ S\circ For,T_V(\phi_g\star \alpha_g))_*\\
	=&\sum_{j=1}^n\rho_V(g)_{jj}\cdot (S\circ For,\phi_g\star\alpha_g)_*\\
	=&\chi_V(g)( \iota_g\circ \pi_g)
\end{align*}
where the second equality is lemma \ref{functoriality} and the third equality combines the decomposition (\ref{decomp}) above and lemma \ref{tracedecomposition}.

\end{proof}

\subsection{The case of symmetric powers}

Now for any dg category $\catC$ we can define its tensor powers. Recall that by definition $\catC^{\otimes n}$ is the dg category with objects $x_\bullet=(x_1,x_2,...,x_n)$ with each $x_i\in \Ob(\catC)$. The homomorphism spaces are given by $Hom_{\catC^{\otimes n}}(x_\bullet,y_\bullet):=\catC(x_1,y_1)\otimes \catC(x_2,y_2)\otimes...\otimes \catC(x_n,y_n)$.
There is a right action of $S^n$ on $\catC$ defined on objects by $\sigma(x_1,...,x_n)=(x_{\sigma(1)},...,x_{\sigma(n)}).$ This action is strict in the strongest possible sense. All the higher categorical data is trivial, $\theta_{\sigma,\tau}=\id$, $\eta=\id$. Let $\widehat{\catC^{\otimes n}}$ be the pre triangulated hull of $\catC^{\otimes n}$ and note that $S_n$ still acts on $\widehat{\catC^{\otimes n}}$. If we set $\Sym^n(\catC)=(\widehat{\catC^{\otimes n}})^{S_n}$ then the theorem tells us
$$HH_\bullet(\Sym^n(\catC))=\bigoplus_{[\sigma]\in S_n/\sim } HH_\bullet\big(\widehat{\catC^{\otimes n}},\rho_\sigma\big)^{C(\sigma)}.$$
In particular it contains one summand (the one corresponding to the identity permutation)
$$HH_\bullet\big(\widehat{\catC^{\otimes n}}\big)^{S_n}\cong HH_\bullet(\catC^{\otimes n})^{S_n}\cong (HH_\bullet(\catC)^{\otimes n})^{S_n}=\Sym^n(HH_\bullet(\catC))$$
where the second isomorphism is the Künneth isomorphism (which is an isomorphism of $S_n$-modules). We get the following corollary.
\begin{cor}
	If $\catC$ is a small dg category then $HH_\bullet(\Sym^n(\catC))$ contains naturally $\Sym^n(HH_\bullet(\catC))$ as a summand.
\end{cor}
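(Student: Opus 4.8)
The plan is to unwind the Corollary directly from Theorem \ref{mainthm} applied to the $S_n$-action on $\widehat{\catC^{\otimes n}}$, and then identify the summand indexed by the identity permutation using the Künneth isomorphism from Proposition \ref{kunneth}. First I would observe that the $S_n$-action on $\catC^{\otimes n}$ is strict with all higher data trivial, so it extends to the pretriangulated hull $\widehat{\catC^{\otimes n}}$, and $\widehat{\catC^{\otimes n}}$ has finite direct sums; hence Theorem \ref{mainthm} applies and gives the decomposition $HH_\bullet(\Sym^n(\catC)) \cong \bigoplus_{[\sigma]} HH_\bullet(\widehat{\catC^{\otimes n}};\rho_\sigma)^{C(\sigma)}$. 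The term $[\sigma]=[e]$ contributes $HH_\bullet(\widehat{\catC^{\otimes n}};\rho_e)^{S_n} = HH_\bullet(\widehat{\catC^{\otimes n}})^{S_n}$ (using $\rho_e\cong \id$ via $\eta$, which is trivial here), and this is a direct summand of the whole, as desired.

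Next I would identify $HH_\bullet(\widehat{\catC^{\otimes n}})^{S_n}$ with $\Sym^n(HH_\bullet(\catC))$. For this I need two facts. The first is that Hochschild homology is invariant under passing to the pretriangulated hull, so $HH_\bullet(\widehat{\catC^{\otimes n}}) \cong HH_\bullet(\catC^{\otimes n})$, and this isomorphism is $S_n$-equivariant since the $S_n$-action is inherited from $\catC^{\otimes n}$. The second is the iterated Künneth isomorphism $HH_\bullet(\catC^{\otimes n}) \cong HH_\bullet(\catC)^{\otimes n}$, which follows by induction from Proposition \ref{kunneth}; crucially, Proposition \ref{kunneth} states that the Künneth map is compatible with the $S_2$-action, and by combining the transpositions one checks that the iterated map is compatible with the full $S_n$-action (the $S_n$-action on the right being the usual permutation action with Koszul signs on $HH_\bullet(\catC)^{\otimes n}$). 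Taking $S_n$-invariants of an $S_n$-equivariant isomorphism then yields $HH_\bullet(\widehat{\catC^{\otimes n}})^{S_n} \cong (HH_\bullet(\catC)^{\otimes n})^{S_n} = \Sym^n(HH_\bullet(\catC))$.

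The main obstacle is the $S_n$-equivariance of the iterated Künneth isomorphism. Proposition \ref{kunneth} only records compatibility with the $S_2$ that swaps the two tensor factors $\catC = \catB$; to get the full symmetric group one must check that the various adjacent-transposition instances of this compatibility fit together coherently, i.e. that the shuffle map $Sh$ is associative up to the relevant homotopies in a way that respects all permutations simultaneously. Since $Sh$ is built from shuffles and the relevant signs are exactly the Koszul signs, this is a bookkeeping verification at the chain level (or, more conceptually, it follows from the "basis independent" description via $\Delta^r \otimes^L \Delta^l$ and the symmetric monoidality of the bar construction established in the lemmas preceding Proposition \ref{kunneth}). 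I would state this equivariance explicitly and either cite the shuffle-product literature or indicate that it follows from the symmetric monoidal refinement of the lemmas already proved. A secondary, minor point to address is the invariance of $HH_\bullet$ under the pretriangulated hull together with its $S_n$-equivariance; this is standard, following from the fact that the inclusion $\catC^{\otimes n} \hookrightarrow \widehat{\catC^{\otimes n}}$ is a Morita equivalence compatible with the functoriality of Hochschild homology recorded in Lemma \ref{functoriality}.
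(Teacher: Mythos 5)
Your proposal is correct and follows essentially the same route as the paper: apply Theorem \ref{mainthm} to the strict $S_n$-action on $\widehat{\catC^{\otimes n}}$, isolate the summand indexed by the identity permutation, and identify it with $\Sym^n(HH_\bullet(\catC))$ via invariance under the pretriangulated hull and the ($S_n$-equivariant) iterated Künneth isomorphism. You are in fact more explicit than the paper about the two points that need care (the upgrade from $S_2$- to full $S_n$-compatibility of the shuffle map, and the Morita invariance step), both of which the paper asserts without proof.
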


\end{document}